%

\documentclass[11pt,a4paper]{article}
\usepackage{amsmath}
\usepackage{amssymb}
\usepackage{amsthm}
\usepackage{amsfonts}
\usepackage{enumerate}
\usepackage{xspace} 
\usepackage{graphics} 
\usepackage{graphicx}
\usepackage{pgfpages}
\usepackage{url}
\usepackage{booktabs}
\usepackage{verbatim}
\usepackage{hyperref}
\usepackage{breakurl}
\usepackage[margin=1.3in]{geometry}
\frenchspacing



\addtolength{\parskip}{0.5ex}

%
\newcommand{\tluste}[1]{\mbox{\mathversion{bold}$ #1 $}}

\newcommand{\mna}[1]{{\mathcal{#1}}}
\newcommand{\imace}[1]{\mbox{$\tluste{#1}$}}


\newcommand{\onum}[1]{\mbox{$\overline{{#1}}$}} 
\newcommand{\unum}[1]{\mbox{$\underline{{#1}}$}}

\newcommand{\R}[0]{{\mathbb{R}}}

\def\eps{{\varepsilon}}
\newcommand{\mmid}[0]{;\,}		

\newcommand{\seznam}[1]{{\{1, \ldots, {#1}\}}}
\newcommand{\sseznam}[2]{{\{{#1}, \ldots, {#2}\}}}
\def\clqq{``}
\def\crqq{''}
\def\quo#1{\clqq{}#1\crqq{}}  

	

\DeclareMathOperator{\rr}{r}

\def\nref#1{$(\ref{#1})$}

\newtheorem{theorem}{Theorem}

\newtheorem{proposition}{Proposition}

\newtheorem{corollary}{Corollary}

\theoremstyle{definition}

\newtheorem{example}{Example}
\newtheorem{remark}{Remark}

\begin{document}

\title{Tolerances, robustness and parametrization of matrix properties related to optimization problems}

\author{
  Milan Hlad\'{i}k\footnote{
Charles University, Faculty  of  Mathematics  and  Physics,
Department of Applied Mathematics, 
Malostransk\'e n\'am.~25, 11800, Prague, Czech Republic, 
e-mail: \texttt{milan.hladik@matfyz.cz}
}
}

\date{\today}
\maketitle

\begin{abstract}
When we speak about parametric programming, sensitivity analysis, or related topics, we usually mean the problem of studying specified perturbations of the data such that for a given optimization problem some optimality criterion remains satisfied. In this paper, we turn to another question. Suppose that $A$ is a matrix having a specific property $\mathcal{P}$. What are the maximal allowable variations of the data such that the property still remains valid for the matrix? We study two basic forms of perturbations. The first is a perturbation in a given direction, which is closely related to parametric programming. The second type consists of all possible data variations in a neighbourhood specified by a certain matrix norm; this is related to the tolerance approach to sensitivity analysis, or to stability. The matrix properties discussed in this paper are positive definiteness; P-matrix, H-matrix and P-matrix property; total positivity; inverse M-matrix property and inverse nonnegativity.
\end{abstract}

\textbf{Keywords:}\textit{ Positive definiteness, P-matrix, M-matrix, H-matrix, totally positive matrix, regularity radius.}

\section{Introduction}


So far, parametric programming and sensitivity analysis in mathematical programming \cite{Gal1979,GalGre1997,NozGud1974} was mainly focused on studying optimality criteria under certain data perturbations. In contrast, herein we will be devoted to parametrization and sensitivity of special matrix properties. Notice that many matrix properties closely relate to properties of optimization problems. For example, positive definiteness of the Hessian matrix indicates convexity of a function, or P-matrix property shows unique solvability of a linear complementarity problem. Stability and sensitivity of such matrix properties therefore reflect stability and sensitivity of the corresponding optimization problem.

\paragraph{Notation.}
The all-ones vector of convenient length is denoted by $e:=(1,\dots,1)^T$, the all-ones matrix by $E:=ee^T$, the identity matrix of size $n$ by $I_n$, and the $i$th standard unit vector by $e_i$.
For a matrix $A$, we use $A^{ij}$ for the matrix obtained from $A$ by removing the $i$th row and $j$th column. The spectral radius of $A$ is $\rho(A)$.

\paragraph{Vector and matrix norms.}

Among the vector norms, we will particularly utilize $p$-norms defined for every $p\geq 1$ and $x\in\R^n$ as $\|x\|_p:=\big(\sum_{i=1}^n|x|_i^p\big)^\frac{1}{p}$. Subsequently, the maximum norm reads $\|x\|_{\infty}=\max_i|x|_i$.

For two arbitrary vector norms $\|x\|_{\alpha},\|x\|_{\beta}$, the subordinate matrix norm \cite{Hig1996} is defined by 
$$
\|A\|_{\alpha,\beta}:=\max_{\|x\|_{\alpha}=1}\|x\|_{\beta}.
$$
Utilizing the vector maximum norm and $1$-norm, we get
\begin{align}\label{normInf1}
\|A\|_{\infty,1}
:=\max_{\|x\|_{\infty}=1}\|x\|_{1}
 =\max_{y,z\in\{\pm1\}^n} y^TAz;
\end{align}
see \cite{Fie2006}. In contrast to many other norms, this one is NP-hard to compute \cite{Roh2000}.
Provided $\alpha\equiv\beta$, the subordinate matrix norm reduces to the standard induced matrix norm. In particular, the 2-norm (or, the spectral norm) of $A$ is equal to the maximum singular value, that is $\|A\|_2=\sigma_{\max}(A)$. Another frequently used (non-induced) matrix norm is the Frobenius norm $\|A\|_F:=\sqrt{\sum_{i,j}a_{ij}^2}$.

A matrix norm is \emph{consistent} if $\|AB\|\leq\|A\|\cdot\|B\|$ for every $A,B\in\R^{n\times n}$. For consistent matrix norms we have for each matrix $A\in\R^{n\times n}$ the upper bound on its spectral radius $\rho(A)\leq\|A\|$.
Notice that the max-norm
$$
\|A\|_{\max}:=\max_{i,j}|a_{ij}|
$$
is not consistent. In contrast, any induced matrix norm is consistent 
and satisfies 
\begin{align}\label{normI}
\|I_n\|=1. 
\end{align}
This property holds also for some other norms, e.g., the max-norm, but not for example for the Frobenius norm.

We will particularly utilize matrix norms satisfying
\begin{subequations}\label{normAss}
\begin{align}
\label{normAss1}
\|A'\| &\leq \|A\|\quad\mbox{whenever $A'$ is a submatrix of $A$},\\
\label{normAss2}
\|e_i^{}e_j^T\|&=1\quad \forall i,j.
\end{align}
\end{subequations}
These two properties are not very restrictive since most of the commonly used norms fulfill them. They are satisfied by any induced $p$-norm, Frobenius norm or max-norm, for instance. 

A matrix norm is called \emph{absolute} if $\|A\|=\||A|\|$. This property is satisfied for the induced $1$-~and $\infty$-norm, Frobenius norm or max-norm, for instance, but not for the spectral norm. 
A matrix norm is \emph{monotone} if $|A|\leq B$ implies $\|A\|\leq\|B\|$. This holds for any induced $p$-norm, Frobenius norm or max-norm, for instance.

\paragraph{Regularity radius.}
Let $A\in\R^{n\times n}$ be a nonsingular matrix and $\|\cdot\|$ a selected matrix norm. Regularity radius is the distance to the nearest singular matrix and denoted as
$$
\rr(A):=\min\{\|A-B\|\mmid B\mbox{ is singular}\}.
$$
For the spectral norm, Frobenius norm and some other orthogonally invariant matrix norms, the regularity radius is given by the smallest singular value, so $\rr(A)=\sigma_{\min}(A)$.
More in general, for any induced matrix norm, the regularity radius is described by the formula $\rr(A)=\|A^{-1}\|^{-1}$ by the Gastinel--Kahan theorem \cite{Hig1996,Kah1966}.

For the max-norm, the regularity radius can be expressed as \cite{HarHla2016a,KreLak1998,PolRoh1993} 
\begin{align}\label{radReg}
\rr(A)=\frac{1}{\max_{y,z\in\{\pm1\}^n} y^TA^{-1}z}.
\end{align}
Utilizing the matrix norm \nref{normInf1}, the above value reads $\|A^{-1}\|_{\infty,1}^{-1}$; see \cite{Fie2006}. For this reason, determining the regularity radius $\rr(A)$ is NP-hard \cite{PolRoh1993}.

\paragraph{Special matrices and the structure of the paper.}
Let us introduce some of the special matrices addressed in this paper. A nonsingular matrix $A\in\R^{n\times n}$ is \emph{inverse nonnegative} if $A^{-1}\geq0$. The inequality between vectors and matrices in understood entrywise throughout the paper.
A matrix $A\in\R^{n\times n}$ is an \emph{M-matrix} if $a_{ij}\leq0$ for every $i\not=j$ and one of the following equivalent conditions holds \cite{HorJoh1991}
\begin{itemize}
\item
$A^{-1}\geq0$,
\item
$Av>0$ for some $v>0$.
\item
$A=k I_n-P$, where $P\geq0$ and $k>\rho(P)$,
\item
all eigenvalues are positive,
\item
real parts of all eigenvalues are positive.
\end{itemize}
A matrix $A\in\R^{n\times n}$ is an \emph{H-matrix} if the so called \emph{comparison matrix} $\langle A\rangle$ is an M-matrix, where $\langle A\rangle_{ii}=|a_{ii}|$ and $\langle A\rangle_{ij}=-|a_{ij}|$ for $i\not=j$.
Other types of matrices will be introduced later.

For a particular matrix $A\in\R^{n\times n}$ and property $\mna{P}$, we investigate two problems: its parametrization and tolerance radius. The former represents a perturbation in a specified direction, whereas the latter considers all perturbations within a particular neighbourhood.

By the \emph{parametrization} of a matrix $A\in\R^{n\times n}$, we mean a matrix $A_{\delta}=A-\delta \tilde{A}$, where $\delta$ is a parameter and $\tilde{A}\in\R^{n\times n}$ is given. We are interested in the set of all admissible values, that is, the values of $\delta$ such that $A_{\delta}$ satisfies property  $\mna{P}$. Since the set of admissible values can be complicated, we restrict often ourselves to computing a (closed, open or semi-open) interval $\mathfrak{o}\ni0$ such that each $\delta\in\mathfrak{o}$ is admissible.

The \emph{tolerance radius} is usually defined as follows
\begin{align*}
\delta^*&=\inf\{\delta\geq0\mmid 
 \exists A': \|A'\|\leq\delta, A+A'\mbox{ does not satisfy }\mna{P}\},\\
 &=\sup\{\delta\geq0\mmid 
  A+A'\mbox{ satisfies }\mna{P}\mbox{ for all }A':\|A'\|<\delta\}.
\end{align*}
That is, $\delta^*$ is the minimum distance to a matrix not satisfying property $\mna{P}$ in a given matrix norm.
In some cases, we will consider structured perturbation matrices $A'$ (for example, symmetric perturbations for positive definiteness).

In the case of max-norm, the tolerance radius is closely related to tolerance analysis. Tolerance analysis asks for maximal simultaneous and independent variations of input coefficients such that some invariant remains valid. In the case of sensitivity analysis in linear programming, the tolerance approach uses the invariant representing optimality of a basis or optimal partition \cite{Hla2011c,Wen1985,WenChen2010}. In the case of linear systems of equations and inequalities, the invariant may represent also its (in)feasibility \cite{HlaRoh2016a}, among others.

\section{Positive determinant}\label{sDet}

We investigate positive determinant first. This is maybe not the most interesting property itself, but will utilize it later on for computing tolerances and stability radii of other properties. Throughout this section, let us have $A\in\R^{n\times n}$ be such that $\det(A)>0$.

\paragraph{Parametrization.} 
Consider a parametrized matrix $A$ in the form $A_{\delta}=A-\delta \tilde{A}$, where $\tilde{A}\in\R^{n\times n}$ is given. We are interested in the set of all admissible values of $\delta$ for which $\det(A_{\delta})>0$. Expansion of the determinant $\det(A_{\delta})=\det(A-\delta \tilde{A})$ yields a polynomial of degree at most $n$, from which the range could be computed. Hence the set of all admissible values of $\delta$ is formed by union of at most $\lfloor\frac{n}{2}\rfloor$ open intervals, not necessarily bounded.

Provided $\tilde{A}$ has rank one, the range of admissible values is more easy to find. In this case $\tilde{A}$ has the form of $\tilde{A}=ab^T$ for some $a,b\in\R^n$.

\begin{theorem}\label{thmDetParRankone}
If $\tilde{A}=ab^T$, then the set of admissible values of $\delta$ for $A_{\delta}$ to have positive determinant is 
\begin{itemize}
\item
$(-\infty,\frac{1}{b^TA^{-1}a})$ provided $b^TA^{-1}a>0$,
\item
$(\frac{1}{b^TA^{-1}a},\infty)$ provided $b^TA^{-1}a<0$,
\item
$\R$ provided $b^TA^{-1}a=0$.
\end{itemize}
\end{theorem}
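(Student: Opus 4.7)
The plan is to apply the matrix determinant lemma (the Sherman--Morrison identity for determinants), which states that for a nonsingular $A$ and vectors $a,b\in\R^n$,
\begin{align*}
\det(A-\delta ab^T) = \det(A)\,\bigl(1-\delta\, b^TA^{-1}a\bigr).
\end{align*}
Since $\tilde A = ab^T$, this immediately rewrites $\det(A_\delta)$ as a linear (affine) function of $\delta$, which is the conceptual crux: the determinant polynomial of degree at most $n$ collapses to degree at most one in the rank-one case.

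Using the hypothesis $\det(A)>0$, the condition $\det(A_\delta)>0$ is then equivalent to $1-\delta\,b^TA^{-1}a>0$. From here the three cases of the theorem fall out by elementary sign reasoning: if $b^TA^{-1}a>0$, dividing preserves the inequality and yields $\delta<1/(b^TA^{-1}a)$; if $b^TA^{-1}a<0$, dividing reverses it and yields $\delta>1/(b^TA^{-1}a)$; and if $b^TA^{-1}a=0$, the inequality $1>0$ holds for every $\delta\in\R$.

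There is essentially no obstacle here once the determinant lemma is invoked; the only point worth a brief justification is the lemma itself, which one can derive in a line from the block identity
\begin{align*}
\begin{pmatrix} I_n & 0 \\ b^T & 1 \end{pmatrix}
\begin{pmatrix} A-\delta ab^T & \delta a \\ 0 & 1 \end{pmatrix}
\begin{pmatrix} I_n & 0 \\ -b^T & 1 \end{pmatrix}
= \begin{pmatrix} A & \delta a \\ 0 & 1-\delta b^TA^{-1}a \end{pmatrix}\!\cdot\text{(rearranged)},
\end{align*}
or, more cleanly, from $\det(I+uv^T)=1+v^Tu$ applied to $A^{-1}(A-\delta ab^T)=I_n-\delta A^{-1}ab^T$. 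Thus the proof reduces to one determinantal identity plus a case split on the sign of the scalar $b^TA^{-1}a$.
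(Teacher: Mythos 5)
Your proof is correct and follows essentially the same route as the paper: both rest on the matrix determinant lemma (which the paper calls the Sherman--Morrison formula) to get $\det(A_\delta)=\det(A)\,(1-\delta\,b^TA^{-1}a)$, then use $\det(A)>0$ to reduce to the sign condition $1-\delta\,b^TA^{-1}a>0$ and split on the sign of $b^TA^{-1}a$. Your extra justification of the lemma via $\det(I+uv^T)=1+v^Tu$ is a nice touch but not a departure from the paper's argument.
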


\begin{proof}
By the Sherman--Morrison formula 
\begin{align*}
\det(A_{\delta})
=\det(A-\delta ab^T)
=\det(A)(1-\delta b^TA^{-1}a).
\end{align*}
Thus we have the constraint $1>\delta b^TA^{-1}a$, from which $\delta$ is easily derived.
\end{proof}

\paragraph{Radius of positive determinant.} 
Define the radius of positive determinant of $A$ as
\begin{align*}
\delta^*:=  \sup\{\delta\geq0 \mmid 
  \det(A+A')>0\ \forall A': \|A'\|<\delta\},
\end{align*}
where $\|\cdot\|$ is a given matrix norm. It turns out that $\delta^*$ is exactly the radius of nonsingularity $\rr(A)$.

\begin{theorem}\label{thmDetRad}
We have $\delta^*=\rr(A)$.
\end{theorem}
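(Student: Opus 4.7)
The plan is to prove the two inequalities $\delta^* \leq \rr(A)$ and $\delta^* \geq \rr(A)$ separately, using as the only nontrivial input the continuity of the determinant and the fact that a real continuous function that stays nonzero on a connected set retains its sign.

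For the inequality $\delta^* \leq \rr(A)$, I would argue by exhibiting a witness. By the definition of $\rr(A)$ there exists, for every $\varepsilon>0$, a singular matrix $B$ with $\|B-A\| \leq \rr(A)+\varepsilon$. Setting $A'=B-A$ gives a perturbation for which $\det(A+A')=\det(B)=0$, which is not positive. Hence no $\delta>\rr(A)$ can serve as a valid lower bound in the supremum defining $\delta^*$, so $\delta^* \leq \rr(A)$.

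For the reverse inequality $\delta^* \geq \rr(A)$, I would take an arbitrary perturbation $A'$ with $\|A'\|<\rr(A)$ and show $\det(A+A')>0$. Consider the continuous path $t\mapsto A+tA'$ for $t\in[0,1]$. Since $\|tA'\|=t\|A'\|\leq\|A'\|<\rr(A)$ for every such $t$, no matrix on the path is singular, so the real-valued continuous function $f(t):=\det(A+tA')$ never vanishes on $[0,1]$. Because $f(0)=\det(A)>0$, the intermediate value theorem forces $f(1)=\det(A+A')>0$. As $A'$ was arbitrary in the open ball of radius $\rr(A)$, we get $\delta^*\geq\rr(A)$.

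Combining both inequalities yields $\delta^*=\rr(A)$. There is no real obstacle here: the result is essentially a consequence of the definition of $\rr(A)$ together with the standard connectedness argument for a continuous sign-preserving function. The only subtlety is matching the strict/non-strict inequalities in the definitions of $\delta^*$ and $\rr(A)$ correctly, which is handled by letting $\varepsilon\to 0^+$ in the first direction and by using the open ball $\|A'\|<\rr(A)$ in the second.
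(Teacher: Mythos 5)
Your proof is correct and takes essentially the same approach as the paper: your first direction is the contrapositive of the paper's observation that positive determinant implies nonsingularity, and your second direction is exactly the paper's argument of applying the intermediate value theorem to $t\mapsto\det(A+tA')$ along the segment $t\in[0,1]$, using $\|tA'\|\leq\|A'\|<\rr(A)$. The differences (arguing via contrapositives, the explicit $\varepsilon$ handling) are purely presentational.
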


\begin{proof}
Case \quo{$\delta^*\leq\rr(A)$}.
If $\det(A+A')>0$, then $A+A'$ is nonsingular.

Case \quo{$\delta^*\geq\rr(A)$}.
If $\det(A+A')\leq0$, then $\det(A+\alpha A')=0$ for some $\alpha\in(0,1]$, whence $A+\alpha A'$ is singular.
\end{proof}

\section{Positive definiteness}\label{sPd}

In this section we suppose that $A\in\R^{n\times n}$ is (symmetric) positive definite.

\paragraph{Parametrization of positive definiteness.}

Consider the parametrized matrix $A$ in the form $A_{\delta}=A-\delta \tilde{A}$, where $\tilde{A}\in\R^{n\times n}$ is symmetric. The aim is to compute the range of admissible values of $\delta$ for which $A_{\delta}$ remains positive definite.  
Due to continuity of eigenvalues, the range of admissible values of $\delta$ for positive definiteness is the same as for positive determinant. Thus, the problem reduces to computing admissible values for $\det(A_{\delta})>0$.

Focus now on rank one parametrization in the form $A_{\delta}=A-\delta aa^T$, where $a\in\R^n\setminus\{0\}$ is a given vector. 
Obviously,  $A_{\delta}$ is positive definite for every $\delta\leq0$, so we will focus on computing the upper bound only
\begin{align*}
\delta^*:=\sup\{\delta\geq0 \mmid 
  A-\delta aa^T\mbox{ is positive definite}\}.
\end{align*}
Provided $a$ is an eigenvector of $A$ corresponding to an eigenvalue $\lambda$, then $A-\delta aa^T$ decreases this eigenvalue by $\delta$, so we have $\delta^*=\lambda$. In general, the following holds:

\begin{theorem}\label{thmPsdPar}
We have $\delta^*=\frac{1}{a^TA^{-1}a}$.
\end{theorem}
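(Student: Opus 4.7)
The plan is to leverage the paragraph preceding the theorem, which reduces positive definiteness of $A_\delta$ to positivity of its determinant. Since $A_0 = A$ is positive definite and eigenvalues depend continuously on $\delta$, the matrix $A_\delta$ remains positive definite on a neighborhood of $0$, and positive definiteness can fail only when the smallest eigenvalue first crosses zero — at which moment $\det(A_\delta) = 0$. So $\delta^*$ equals the supremum of $\delta\geq 0$ for which $\det(A_{\delta'})>0$ for all $\delta'\in[0,\delta)$.

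The next step is to invoke Theorem~\ref{thmDetParRankone} with the rank-one perturbation $\tilde{A} = aa^T$, i.e.\ $b = a$. Because $A$ is positive definite, so is $A^{-1}$, and since $a \neq 0$ we have $a^T A^{-1} a > 0$. The first bullet of Theorem~\ref{thmDetParRankone} then yields that the admissible set for positive determinant is the open interval $\bigl(-\infty,\, \tfrac{1}{a^T A^{-1} a}\bigr)$. Intersecting with $[0,\infty)$ gives $\bigl[0,\, \tfrac{1}{a^T A^{-1} a}\bigr)$, so $\delta^* = \tfrac{1}{a^T A^{-1} a}$ by the reduction of the first paragraph.

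I do not anticipate any serious obstacle: the argument is a direct concatenation of the continuity-of-eigenvalues reduction (already spelled out just above the theorem) with the Sherman--Morrison-based rank-one parametrization formula already proved in Theorem~\ref{thmDetParRankone}. The only minor point worth stating explicitly in the write-up is the sign condition $a^T A^{-1} a > 0$, which selects the correct bullet of Theorem~\ref{thmDetParRankone} and follows immediately from positive definiteness of $A^{-1}$.
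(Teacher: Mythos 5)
Your proposal is correct and follows essentially the same route as the paper: the paper's proof is exactly the invocation of Theorem~\ref{thmDetParRankone} (first item) with $b=a$, using $a^TA^{-1}a>0$, relying on the continuity-of-eigenvalues reduction to positive determinant stated in the paragraph preceding the theorem. You have merely spelled out that reduction and the sign condition in more detail, which the paper leaves implicit.
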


\begin{proof}
It follows from Theorem~\ref{thmDetParRankone}, first item, since $a^TA^{-1}a>0$.
\end{proof}

\paragraph{Radius of positive definiteness.}

Define the radius of positive definiteness of $A$ as the minimum distance to a symmetric non-positive definite matrix as follows
\begin{align*}
\delta^*:=  \sup\{\delta\geq0 \mmid 
  A+A'\mbox{ is positive definite }\forall A': A'=A'^T,\ \|A'\|<\delta\},
\end{align*}
where $\|\cdot\|$ is a given matrix norm.

\begin{theorem}
For every consistent matrix norm satisfying \nref{normI} we have $\delta^*=\lambda_{\min}(A)$.
\end{theorem}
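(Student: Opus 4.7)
The plan is a two-sided bound. For the upper bound $\delta^* \leq \lambda_{\min}(A)$, I would exhibit a concrete symmetric perturbation that destroys positive definiteness. The natural candidate is $A' = -\lambda_{\min}(A)\,I_n$: since $\|I_n\|=1$ by assumption, we have $\|A'\| = \lambda_{\min}(A)$, and $A + A'$ has smallest eigenvalue $0$, so it fails to be positive definite. Consequently, for any $\delta > \lambda_{\min}(A)$ this $A'$ witnesses $\|A'\| < \delta$ with $A+A'$ not positive definite, forcing $\delta^* \leq \lambda_{\min}(A)$.

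For the lower bound $\delta^* \geq \lambda_{\min}(A)$, I would take an arbitrary symmetric $A'$ with $\|A'\| < \lambda_{\min}(A)$ and show that $A + A'$ is still positive definite. The key inputs are: (i) consistency of the norm gives $\rho(A') \leq \|A'\|$, so every eigenvalue $\mu$ of $A'$ satisfies $|\mu| < \lambda_{\min}(A)$; and (ii) since both $A$ and $A'$ are symmetric, Weyl's inequality yields
\begin{align*}
\lambda_{\min}(A+A') \;\geq\; \lambda_{\min}(A) + \lambda_{\min}(A') \;\geq\; \lambda_{\min}(A) - \rho(A') \;>\; 0.
\end{align*}
This shows $A + A'$ is positive definite, and combining the two directions gives $\delta^* = \lambda_{\min}(A)$.

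The main subtlety, rather than an obstacle, is that one cannot appeal directly to the spectral norm identity $\rho(A') = \|A'\|_2$, because the theorem is stated for an \emph{arbitrary} consistent norm with $\|I_n\|=1$. The argument therefore has to bound eigenvalues through the one-sided inequality $\rho(A') \leq \|A'\|$ (which is exactly where consistency is needed), while the normalization $\|I_n\|=1$ is precisely what makes the candidate perturbation $-\lambda_{\min}(A)I_n$ tight. Both hypotheses of the theorem are thus used, each on a different side of the bound.
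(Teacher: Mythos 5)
Your proof is correct and follows essentially the same route as the paper's: Weyl's inequality combined with $\rho(A')\leq\|A'\|$ (consistency) for the lower bound, and the perturbation $A'=-\lambda_{\min}(A)I_n$ together with $\|I_n\|=1$ for the upper bound. Your write-up is, if anything, slightly more explicit about why the witness perturbation forces $\delta^*\leq\lambda_{\min}(A)$, but the argument is the same.
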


\begin{proof}
Let $A'$ be any symmetric matrix such that $\|A'\|<\lambda_{\min}(A)$. Then by Weyl's theorem \cite{HorJoh1985}
\begin{align*}
\lambda_{\min}(A+A')
\geq \lambda_{\min}(A)+\lambda_{\min}(A')
\geq \lambda_{\min}(A)-\rho(A')
\geq \lambda_{\min}(A)-\|A'\|
> 0.
\end{align*}
Hence $A+A'$ is positive definite.
On the other hand, for the matrix $A':=-\lambda_{\min}(A) I_n$ with the norm $\|A'\|=\lambda_{\min}(A)$ we have $\lambda_{\min}(A+A')=0$, so the radius of positive definiteness cannot be larger than $\lambda_{\min}(A)$. 
\end{proof}

Considering the max-norm, the situation is worse. We have $\delta^*>1$ if and only if the interval matrix $\imace{A}=[A-E,A+E]$ is positive definite, that is, every $A\in\imace{A}$ is positive definite. Checking this property is, however, co-NP-hard \cite{KreLak1998,Roh1994}.
The minimum eigenvalue $\lambda_{\min}(\imace{A}_{\delta})$ of $\imace{A}_{\delta}=[A-\delta E,A+\delta E]$ is defined the minimum eigenvalue on the set of symmetric matrices in $\imace{A}_{\delta}$. It can be expressed by the Hertz formula \cite{Her1992} as
$$
\lambda_{\min}(\imace{A}_{\delta})
 =\min_{y\in\{\pm1\}^n}\lambda_{\min}(A-\delta yy^T).
$$
Thus the radius of positive definiteness can be formulated as minimal $\delta$ such that matrices $A-\delta yy^T$ are positive definite for every $y\in\{\pm1\}^n$.
Based on Theorem~\ref{thmPsdPar}, we have the following. 

\begin{theorem}
For the max-norm, we have $\delta^*=\min_{y\in\{\pm1\}^n}\frac{1}{y^TA^{-1}y}$.
\end{theorem}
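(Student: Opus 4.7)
The plan is to combine the Hertz formula displayed just before the statement with the rank-one parametrization result of Theorem~\ref{thmPsdPar}. The key observation is that the rank-one matrices $yy^T$ with $y\in\{\pm1\}^n$ have max-norm exactly $1$, so $\delta yy^T$ realizes a max-norm perturbation of magnitude $\delta$, and Hertz's formula reduces the infinite family of symmetric max-norm perturbations to a finite family of rank-one ones.

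First I would reformulate $\delta^*$ in terms of $\lambda_{\min}(\imace{A}_\delta)$. Since $A$ is positive definite and the minimum eigenvalue of $\imace{A}_\delta$ depends monotonically and continuously on $\delta$, the supremum $\delta^*$ coincides with the smallest positive $\delta$ at which $\lambda_{\min}(\imace{A}_\delta)=0$. By the Hertz formula this equals the smallest positive $\delta$ such that $A-\delta yy^T$ loses positive definiteness for some $y\in\{\pm1\}^n$.

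Next, for each fixed $y\in\{\pm1\}^n$, I would invoke Theorem~\ref{thmPsdPar} with $a:=y$: since $y^TA^{-1}y>0$ (because $A^{-1}$ is positive definite), the matrix $A-\delta yy^T$ is positive definite precisely for $\delta<\frac{1}{y^TA^{-1}y}$ and fails to be positive definite at the threshold $\delta=\frac{1}{y^TA^{-1}y}$. Taking the minimum of these thresholds over the $2^n$ sign patterns yields $\delta^*=\min_{y\in\{\pm1\}^n}\frac{1}{y^TA^{-1}y}$.

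The main obstacle is really just to justify the reduction step cleanly: one must check that the condition \emph{``$A+A'$ is positive definite for every symmetric $A'$ with $\|A'\|_{\max}<\delta$''} is equivalent to \emph{``$\lambda_{\min}(\imace{A}_{\delta'})>0$ for every $\delta'<\delta$''}. This follows from the identification of $\imace{A}_{\delta'}$ with the closed max-norm ball of radius $\delta'$ around $A$, restricted to its symmetric members, together with continuity of eigenvalues in $\delta$. Once this is in place, combining Hertz's formula with Theorem~\ref{thmPsdPar} immediately finishes the argument.
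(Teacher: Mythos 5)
Your proposal is correct and follows essentially the same route as the paper: the paper derives this theorem exactly by combining the Hertz formula $\lambda_{\min}(\imace{A}_{\delta})=\min_{y\in\{\pm1\}^n}\lambda_{\min}(A-\delta yy^T)$ with the rank-one parametrization result of Theorem~\ref{thmPsdPar}, minimizing the thresholds $\frac{1}{y^TA^{-1}y}$ over the $2^n$ sign vectors. Your explicit justification of the reduction step (identifying symmetric max-norm perturbations of radius below $\delta$ with the symmetric members of the interval matrices $\imace{A}_{\delta'}$, $\delta'<\delta$) is a detail the paper leaves implicit, but it is the same argument.
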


The above formula can be equivalently expressed as
\begin{align}\label{radPsdYy}
\delta^*=\frac{1}{\max_{y\in\{\pm1\}^n} y^TA^{-1}y}.
\end{align}
This is in an interesting correspondence with a regularity radius of a nonsingular matrix $B\in\R^{n\times n}$, which by \nref{radReg} reads
\begin{align*}
\rr(B)
=\|B^{-1}\|_{\infty,1}^{-1}
=\frac{1}{\max_{y,z\in\{\pm1\}^n} y^TB^{-1}z}.
\end{align*}
Since $A^{-1}$ is positive definite, it has the positive definite square root $\sqrt{A^{-1}}$. Thus we get 
$
y^TA^{-1}y
=y^T\sqrt{A^{-1}}\sqrt{A^{-1}}y
=\|\sqrt{A^{-1}}y\|_2^2
$
and
$$
\max_{y\in\{\pm1\}^n} y^TA^{-1}y
=\max_{y\in\{\pm1\}^n} \|\sqrt{A^{-1}}y\|_2^2
=\max_{\|y\|_{\infty}=1} \|\sqrt{A^{-1}}y\|_2^2.
$$
Hence \nref{radPsdYy} has equivalent form
\begin{align*}
\delta^*=\frac{1}{\|\sqrt{A^{-1}}\|_{\infty,2}^2}.
\end{align*}

As a consequence of NP-hardness of computing the $\|\cdot\|_{\infty,1}$ norm we have 

\begin{corollary}
Computing the norm $\|B\|_{\infty,2}$ is an NP-hard problem.
\end{corollary}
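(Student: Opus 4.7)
My plan is to reduce a known NP-hard problem to computing $\|B\|_{\infty,2}$. The reduction rests on two simple identities and a rational instance construction.

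First, I would establish the convexity-based identity
\begin{align*}
\|B\|_{\infty,2}^2 = \max_{y \in \{\pm 1\}^n} y^T B^T B y.
\end{align*}
Since $B^T B$ is positive semidefinite, the map $y \mapsto \|By\|_2^2 = y^T B^T B y$ is convex, so its maximum over the box $\{y : \|y\|_\infty \le 1\}$ is attained at a vertex. This mirrors the mechanism used to pass to \nref{radPsdYy} in the preceding discussion.

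Second, I would observe that for any symmetric PSD matrix $M$ one has $\|M\|_{\infty,1} = \max_{y \in \{\pm 1\}^n} y^T M y$. This follows from the Cauchy--Schwarz inequality applied to the semi-inner product $(u,v) \mapsto u^T M v$, giving $y^T M z \le \sqrt{y^T M y \cdot z^T M z} \le \max_y y^T M y$, while the reverse inequality is immediate by taking $z = y$. Combining this with the previous identity, whenever $M = B^T B$ with $B$ rational, we get
\begin{align*}
\|M\|_{\infty,1} = \|B\|_{\infty,2}^2,
\end{align*}
so a polynomial-time algorithm for $\|B\|_{\infty,2}$ would produce one for $\|M\|_{\infty,1}$ on the class of PSD matrices $M$ arising in this way.

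The main obstacle is that constructing a rational $B$ with $B^T B = M$ for an \emph{arbitrary} rational PSD $M$ requires a matrix square root and is generally infeasible. I would sidestep this by choosing a structured NP-hard instance: take $B \in \{-1, 0, 1\}^{m \times n}$ to be the signed vertex-edge incidence matrix of a graph $G = (V, E)$, each row being $e_i - e_j$ for an arbitrarily oriented edge $(i,j)$. Then $B^T B$ is the graph Laplacian $L$, and for every $y \in \{\pm 1\}^n$,
\begin{align*}
\|By\|_2^2 = \sum_{(i,j) \in E}(y_i - y_j)^2 = 4\,|\mathrm{cut}(y)|.
\end{align*}
Hence $\|B\|_{\infty,2}^2 = 4\,\mathrm{MAXCUT}(G)$, so a polynomial-time evaluation of $\|B\|_{\infty,2}$ would solve MAX-CUT. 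Equivalently, this exhibits instances on which $\|M\|_{\infty,1}$ remains NP-hard among PSD matrices of the form $B^T B$ with rational $B$, making the corollary a genuine consequence of the NP-hardness of $\|\cdot\|_{\infty,1}$ as asserted.
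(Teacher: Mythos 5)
Your proof is correct, but it takes a genuinely different route from the paper's. The paper obtains the corollary as a one-line consequence of Rohn's theorem that computing $\|\cdot\|_{\infty,1}$ is NP-hard: just before the corollary it establishes, for positive definite $A^{-1}$, the identity $\max_{y\in\{\pm1\}^n}y^TA^{-1}y=\|\sqrt{A^{-1}}\|_{\infty,2}^2$, so that hardness of the $(\infty,1)$-norm (equivalently, of the radius \nref{radPsdYy}) on positive definite matrices transfers to the $(\infty,2)$-norm. You prove the same two structural facts (vertex attainment of the convex maximization, and $\|M\|_{\infty,1}=\max_y y^TMy$ for PSD $M$ via Cauchy--Schwarz), but then you replace the appeal to Rohn's theorem by a direct reduction from MAX-CUT, taking $B$ to be a signed incidence matrix so that $B^TB$ is the graph Laplacian and $\|B\|_{\infty,2}^2=4\,\mathrm{MAXCUT}(G)$. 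This buys you two things the paper's derivation lacks: self-containedness (no black-box citation of the $\|\cdot\|_{\infty,1}$ hardness, whose proof itself runs through MAX-CUT-type instances), and, more importantly, a clean resolution of the rationality issue that the paper glosses over --- a formal reduction from $\|M\|_{\infty,1}$ to $\|B\|_{\infty,2}$ needs a polynomial-time-representable factor $B$ with $B^TB=M$, and $\sqrt{M}$ (or a Cholesky factor) of a rational hard instance is in general irrational, whereas your incidence matrices have entries in $\{-1,0,1\}$. The paper's approach is shorter and situates the result within its chain of radius formulas; yours is the one you would want if the corollary had to stand on its own. Two cosmetic points you may wish to tighten: since $\|B\|_{\infty,2}$ itself is irrational on your instances, phrase the hardness via the integer quantity $\|B\|_{\infty,2}^2$ (or the decision problem $\|B\|_{\infty,2}^2\geq 4k$), noting that polynomially many bits of precision suffice to recover it; and if one insists on square matrices, pad $B$ with zero rows or columns, which does not change the norm.
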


Due to intractability of determining $\delta^*$ for the max-norm, polynomially computable cases and simple bounds can be useful. 
A tractable case is when $A$ is inverse nonnegative simply because $\max_{y\in\{\pm1\}^n} y^TA^{-1}y=e^TA^{-1}e$. Recall that M-matrices belong to the class of inverse nonnegative matrices for example.

\begin{proposition}
If $A$ is inverse nonnegative, then for the max-norm we have $\delta^*=\frac{1}{e^TA^{-1}e}$.
\end{proposition}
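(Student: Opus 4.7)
The plan is to combine the max-norm formula for the radius of positive definiteness already derived in the section, namely
$$
\delta^*=\frac{1}{\max_{y\in\{\pm1\}^n} y^TA^{-1}y},
$$
with an elementary observation about quadratic forms whose matrix has nonnegative entries. Writing $B:=A^{-1}$, the task reduces to showing that under the assumption $B\geq 0$ one has $\max_{y\in\{\pm1\}^n} y^TBy = e^TBe$.

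First I would expand the quadratic form entrywise: for any $y\in\{\pm1\}^n$,
$$
y^TBy=\sum_{i,j} B_{ij}\,y_iy_j.
$$
Because $B\geq 0$, each coefficient $B_{ij}$ is nonnegative, and because $y\in\{\pm1\}^n$, the factor $y_iy_j$ lies in $\{-1,+1\}$. Consequently $B_{ij}y_iy_j\le B_{ij}$ term by term, hence $y^TBy\le\sum_{i,j}B_{ij}=e^TBe$. Equality is attained by the choice $y=e$, which proves $\max_{y\in\{\pm1\}^n} y^TBy=e^TBe$.

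Plugging this back into the formula for $\delta^*$ gives $\delta^*=\frac{1}{e^TA^{-1}e}$, as claimed. There is really no hard step here; the only delicate point is to recall that the derivation of \nref{radPsdYy} in the preceding discussion remains valid under the running hypothesis of Section~\ref{sPd} that $A$ is positive definite, so that the maximum indeed represents the max-norm radius of positive definiteness, and to notice that inverse nonnegativity of $A$ collapses the otherwise NP-hard combinatorial maximum over $\{\pm1\}^n$ to the single evaluation at $y=e$.
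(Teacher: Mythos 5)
Your proof is correct and follows essentially the same route as the paper: the paper derives the proposition directly from the max-norm formula $\delta^*=\bigl(\max_{y\in\{\pm1\}^n} y^TA^{-1}y\bigr)^{-1}$ together with the observation that $\max_{y\in\{\pm1\}^n} y^TA^{-1}y=e^TA^{-1}e$ when $A^{-1}\geq0$, which is exactly your argument. The only difference is that the paper states this identity without justification ("simply because"), whereas you spell out the term-by-term bound $B_{ij}y_iy_j\leq B_{ij}$ with equality at $y=e$.
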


\begin{proposition}
For the max-norm we have $\delta^*\geq \frac{1}{n}\lambda_{\min}(A)$.
\end{proposition}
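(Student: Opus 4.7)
The plan is to use the formula $\delta^*=\frac{1}{\max_{y\in\{\pm1\}^n} y^TA^{-1}y}$ from the previous theorem and produce an upper bound on the denominator of the form $\frac{n}{\lambda_{\min}(A)}$.

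First I would recall that since $A$ is positive definite, so is $A^{-1}$, and its eigenvalues are reciprocals of the eigenvalues of $A$. In particular $\lambda_{\max}(A^{-1})=\frac{1}{\lambda_{\min}(A)}$. The Rayleigh quotient inequality then gives, for any nonzero $y\in\R^n$, the bound
\begin{align*}
y^TA^{-1}y \leq \lambda_{\max}(A^{-1})\,\|y\|_2^2 = \frac{\|y\|_2^2}{\lambda_{\min}(A)}.
\end{align*}

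Next I would specialize to $y\in\{\pm1\}^n$, for which $\|y\|_2^2=n$. Combined with the display above, this yields
\begin{align*}
\max_{y\in\{\pm1\}^n} y^TA^{-1}y \leq \frac{n}{\lambda_{\min}(A)}.
\end{align*}
Taking reciprocals and invoking the expression for $\delta^*$ established in the preceding theorem gives $\delta^*\geq\frac{1}{n}\lambda_{\min}(A)$, as required.

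There is no real obstacle here; the argument is a two-line application of the Rayleigh principle to $A^{-1}$ combined with $\|y\|_2^2=n$ for sign vectors. The only thing to watch for is making sure the formula used for $\delta^*$ is the max-norm one derived above, which is why I would explicitly cite the preceding theorem at the start.
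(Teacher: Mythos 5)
Your proof is correct, but it follows a genuinely different route from the paper's. You start from the exact max-norm characterization $\delta^*=\frac{1}{\max_{y\in\{\pm1\}^n} y^TA^{-1}y}$ established in the preceding theorem, and then bound the quadratic form by the Rayleigh principle, $y^TA^{-1}y\leq\lambda_{\max}(A^{-1})\|y\|_2^2=\frac{n}{\lambda_{\min}(A)}$ for sign vectors; taking reciprocals (legitimate, since both quantities are positive by positive definiteness of $A^{-1}$) gives the claim. The paper instead argues by direct eigenvalue perturbation: every symmetric matrix $\tilde{A}$ in the interval matrix $[A-\delta E,A+\delta E]$ satisfies $\lambda_{\min}(\tilde{A})\geq\lambda_{\min}(A)-\lambda_{\max}(\delta E)=\lambda_{\min}(A)-\delta n$ (a Weyl-type bound, using $\lambda_{\max}(E)=n$), so positive definiteness persists whenever $\delta n<\lambda_{\min}(A)$. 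Your argument is shorter once the exact formula is available, but it rests on that exponential-size (indeed NP-hard to evaluate) characterization, which in turn depends on Hertz's theorem; the paper's argument is self-contained perturbation reasoning that never invokes the exact formula and would work even if that characterization had not been derived. Both are valid proofs of the same lower bound.
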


\begin{proof}
Each eigenvalue of every symmetric $\tilde{A}\in\imace{A}_{\delta}$ is bounded from below by $\lambda_{\min}(A)-\lambda_{\max}(\delta E)=\lambda_{\min}(A)-\delta n$; see \cite{HlaDan2010,Roh2012a}. Therefore, if $\lambda_{\min}(A)-\delta n>0$, then $\tilde{A}$ is positive definite.
\end{proof}

Let $A$ be a positive definite M-matrix. By \cite{Hla2017ua}, the interval matrix $\imace{A}=[A-E,A+E]$ is positive definite if and only if it is an H-matrix. Thus, the radius of positive definiteness of this $A$ can be handled by techniques from Section~\ref{sHmat}.

\section{P-matrix property}

A square matrix is a P-matrix if all its principal minors are positive. 
P-matrices play an important role in linear complementarity problems \cite{CotPan2009,Mur1988,Scha2004}
$$
q+Mx\geq0,\ \ x\geq0,\ \ (q+Mx)^Tx=0,
$$
which appear in so many situations such a quadratic programming, bimatrix games, or equilibria in specific economies. Such a complementarity problem has a unique solution for each $q$ if and only if $M$ is a P-matrix.

The problem of checking whether a given matrix is a P-matrix is known to be co-NP-hard \cite{Cox1994,KreLak1998}. There are, however, efficiently recognizable subclasses such as positive definite matrices (discussed in Section~\ref{sPd}), M-matrices (Section~\ref{sMmat}), H-matrices with positive diagonal entries (Section~\ref{sHmat}), or totally positive matrices (Section~\ref{sTp}).

In the past, there was a research also in the error bounds of the solutions of linear complementarity problems \cite{ChenXia2006}, but we will be concerned with error bounds for the matrix itself.

\paragraph{Parametrization.}
Consider the parametrization of a P-matrix $A$ in the form $A_{\delta}=A-\delta \tilde{A}$, where $\tilde{A}\in\R^{n\times n}$ is given.
To find the range of values, for which $A_{\delta}$ remains a P-matrix, we have to inspect all principal submatrices $\hat{A}_{\delta}$ and for each of them to find the range of $\delta$ for which $\det(\hat{A}_{\delta})>0$. 
Positive determinant was dealt with in Section~\ref{sDet}, so the problem reduces to $2^n-1$ problems of positive determinant parametrization. By Theorem~\ref{thmDetParRankone}, we have straightforwardly the following result when $\tilde{A}$ has rank one.

\begin{theorem}
If $\tilde{A}=ab^T$, then the set of admissible values of $\delta$ for $A_{\delta}$ to be an P-matrix is $(\unum{\delta},\onum{\delta})$, where
\begin{align}\label{d1ThmPmatParRankone}
\unum{\delta}
 &=\sup\left\{ \frac{1}{\hat{b}^T\hat{A}^{-1}\hat{a}} \mmid
   \hat{A}\mbox{ is a principal submatrix of }A,\ 
   \hat{b}^T\hat{A}^{-1}\hat{a}<0\right\},\\
\label{d2ThmPmatParRankone}
\onum{\delta}
 &=\inf\left\{ \frac{1}{\hat{b}^T\hat{A}^{-1}\hat{a}} \mmid
   \hat{A}\mbox{ is a principal submatrix of }A,\ 
   \hat{b}^T\hat{A}^{-1}\hat{a}>0\right\},
\end{align}
and $\hat{a},\hat{b}$ denote the restrictions of $a,b$ to the corresponding subvectors compatible with~$\hat{A}$.
\end{theorem}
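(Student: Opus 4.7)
The plan is to observe that a matrix is a P-matrix precisely when every principal submatrix has positive determinant, and then to apply Theorem~\ref{thmDetParRankone} to each of these $2^n-1$ principal submatrices separately.

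The key structural remark is that the rank-one perturbation restricts cleanly to principal submatrices: if $\hat{A}$ is the principal submatrix of $A$ indexed by $S \subseteq \{1,\dots,n\}$, then the corresponding principal submatrix of $A_\delta = A - \delta ab^T$ is exactly $\hat{A} - \delta \hat{a}\hat{b}^T$, where $\hat{a},\hat{b}$ are the subvectors of $a,b$ on the index set $S$. Hence the parametrized principal submatrix is again a rank-one perturbation of a matrix with positive determinant (since $A$ is a P-matrix, so is $\hat{A}$, which gives $\det(\hat{A})>0$), and Theorem~\ref{thmDetParRankone} applies.

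For each principal submatrix $\hat{A}$, Theorem~\ref{thmDetParRankone} gives an open (possibly unbounded) interval of admissible $\delta$, of the form $(-\infty, 1/(\hat{b}^T\hat{A}^{-1}\hat{a}))$ when $\hat{b}^T\hat{A}^{-1}\hat{a}>0$, of the form $(1/(\hat{b}^T\hat{A}^{-1}\hat{a}),\infty)$ when $\hat{b}^T\hat{A}^{-1}\hat{a}<0$, and all of $\R$ when $\hat{b}^T\hat{A}^{-1}\hat{a}=0$. I would then intersect these intervals over all principal submatrices. The intersection is again an open interval, whose upper endpoint is the infimum of $1/(\hat{b}^T\hat{A}^{-1}\hat{a})$ taken over all principal submatrices with $\hat{b}^T\hat{A}^{-1}\hat{a}>0$, that is $\onum{\delta}$ as defined in~\nref{d2ThmPmatParRankone}, and whose lower endpoint is the supremum of $1/(\hat{b}^T\hat{A}^{-1}\hat{a})$ taken over all principal submatrices with $\hat{b}^T\hat{A}^{-1}\hat{a}<0$, that is $\unum{\delta}$ as defined in~\nref{d1ThmPmatParRankone}. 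The case $\hat{b}^T\hat{A}^{-1}\hat{a}=0$ contributes nothing to the intersection, which justifies restricting the sup/inf to the two nonzero cases; and the interval is nonempty since $\delta=0$ lies in it ($A$ being a P-matrix).

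There is no serious obstacle here: the proof is essentially a bookkeeping exercise built on Theorem~\ref{thmDetParRankone}. The only point that requires a moment's care is checking that the principal-submatrix structure is compatible with the rank-one form $ab^T$ so that the same theorem can be invoked, but this is immediate from $(ab^T)_{ij}=a_ib_j$. Since there are only finitely many principal submatrices, the sup and inf are attained and both endpoints are well defined; using the convention $\sup\emptyset=-\infty$ and $\inf\emptyset=+\infty$ handles the degenerate cases where no submatrix produces a positive (respectively negative) value of $\hat{b}^T\hat{A}^{-1}\hat{a}$.
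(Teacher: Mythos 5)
Your proof is correct and takes essentially the same route as the paper: the paper likewise reduces the P-matrix property to positive determinants of all $2^n-1$ principal submatrices and applies Theorem~\ref{thmDetParRankone} to each, intersecting the resulting open intervals. The details you spell out (that the rank-one perturbation restricts to $\hat{A}-\delta\hat{a}\hat{b}^T$ on each principal submatrix, nonemptiness via $\delta=0$, and the conventions for the empty sup/inf) are precisely the bookkeeping the paper leaves implicit when it states the result follows straightforwardly.
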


In some situations, the exponential number of principal submatrices to process can be decreased. One of such situations is when $A$ is an M-matrix, and $\tilde{A}\geq0$ has rank one. A similar result holds when $\tilde{A}\leq0$.

\begin{theorem}
Suppose that $A$ is an M-matrix and $\tilde{A}=ab^T\geq0$. Then the set of admissible values of $\delta$ for $A_{\delta}$ to be an P-matrix is 
\begin{itemize}
\item
$(-\infty,\frac{1}{b^TA^{-1}a})$ provided $b^TA^{-1}a>0$,
\item
$(-\infty,\infty)$ provided $b^TA^{-1}a=0$.
\end{itemize}
\end{theorem}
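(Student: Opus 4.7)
The plan is to invoke the previous theorem, which expresses the endpoints $\unum{\delta}$ and $\onum{\delta}$ via \nref{d1ThmPmatParRankone}--\nref{d2ThmPmatParRankone} as extrema of $1/(\hat{b}^T\hat{A}^{-1}\hat{a})$ taken over all principal submatrices $\hat{A}$ of $A$. The strategy is to exploit the M-matrix sign pattern in order to control both the signs and the relative magnitudes of these quantities.

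First I would remark that $ab^T\geq 0$ lets us assume, without loss of generality, that $a,b\geq 0$: if $a$ has a positive component then the rank-one condition forces $b\geq 0$; the symmetric case $a,b\leq 0$ can be replaced by $(-a,-b)$ and leaves $\tilde{A}$ unchanged; the remaining sign-mixed case forces $\tilde{A}=0$ and is trivial. Next, since every principal submatrix of an M-matrix is again an M-matrix, each inverse $\hat{A}^{-1}$ is entrywise nonnegative. Combined with $\hat{a},\hat{b}\geq 0$, this forces $\hat{b}^T\hat{A}^{-1}\hat{a}\geq 0$ for every principal submatrix, so the index set appearing in the formula \nref{d1ThmPmatParRankone} for $\unum{\delta}$ is empty and $\unum{\delta}=-\infty$.

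The heart of the argument is the monotonicity estimate $\hat{b}^T\hat{A}^{-1}\hat{a}\leq b^TA^{-1}a$ for every principal submatrix $\hat{A}$, and by iteration it suffices to treat the case where $\hat{A}$ arises by deleting a single (say the last) index. Writing
\[
A=\begin{pmatrix}\hat{A}&c\\ d^T&\alpha\end{pmatrix},
\qquad
s := \alpha-d^T\hat{A}^{-1}c,
\]
the Schur complement satisfies $s>0$ since $\det(A)=\det(\hat{A})\,s$ and both $A$ and $\hat{A}$ are M-matrices with positive determinant. Substituting the standard block-inverse formula for $A^{-1}$ and contracting with $a=(\hat{a},a_n)^T$ and $b=(\hat{b},b_n)^T$ yields, after regrouping,
\[
b^TA^{-1}a
 = \hat{b}^T\hat{A}^{-1}\hat{a}
 + \tfrac{1}{s}\bigl(b_n-\hat{b}^T\hat{A}^{-1}c\bigr)\bigl(a_n-d^T\hat{A}^{-1}\hat{a}\bigr).
\]
Because $c,d\leq 0$, $\hat{A}^{-1}\geq 0$ and $a_n,b_n\geq 0$, both parenthesised factors are nonnegative, so the correction term is $\geq 0$.

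Combining these facts, if $b^TA^{-1}a>0$ then the infimum in \nref{d2ThmPmatParRankone} is attained at $\hat{A}=A$ itself, giving $\onum{\delta}=1/(b^TA^{-1}a)$ and admissible set $(-\infty,1/(b^TA^{-1}a))$. If $b^TA^{-1}a=0$, monotonicity forces $\hat{b}^T\hat{A}^{-1}\hat{a}=0$ for all principal submatrices, the index set in \nref{d2ThmPmatParRankone} is empty, and $\onum{\delta}=+\infty$, so the admissible set is $\R$. The main obstacle is isolating the correction term in the block-inverse expansion and verifying that all factor signs line up through the M-matrix assumption; once that identity is in hand, the rest is routine.
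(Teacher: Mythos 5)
Your proof is correct, and its overall skeleton matches the paper's: both arguments plug into the rank-one P-matrix parametrization theorem and reduce everything to two facts, namely $\hat{b}^T\hat{A}^{-1}\hat{a}\geq0$ for every principal submatrix (forcing $\unum{\delta}=-\infty$) and the monotonicity $\hat{b}^T\hat{A}^{-1}\hat{a}\leq b^TA^{-1}a$ (forcing $\onum{\delta}=1/(b^TA^{-1}a)$, or $+\infty$ when the form vanishes). Where you genuinely diverge is in how the monotonicity is established. The paper imports from its reference [JohSmi2011] the entrywise inequality $0\leq\hat{A}^{-1}\leq\widehat{(A^{-1})}$ for principal submatrices of M-matrices, contracts it with $\hat{a},\hat{b}\geq0$, and then bounds $\hat{b}^T\widehat{(A^{-1})}\hat{a}\leq b^TA^{-1}a$ because the full quadratic form only adds nonnegative terms. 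You instead prove the scalar inequality from scratch: deleting one index at a time and using the block-inverse identity
\[
b^TA^{-1}a=\hat{b}^T\hat{A}^{-1}\hat{a}
 +\tfrac{1}{s}\bigl(b_n-\hat{b}^T\hat{A}^{-1}c\bigr)\bigl(a_n-d^T\hat{A}^{-1}\hat{a}\bigr),
 \qquad s=\alpha-d^T\hat{A}^{-1}c>0,
\]
you exhibit the difference as a product of factors whose signs are controlled by $c,d\leq0$, $\hat{A}^{-1}\geq0$ and $a,b\geq0$; the identity itself checks out, and the iteration is legitimate because principal submatrices of M-matrices are again M-matrices. Your route buys a self-contained, elementary proof in place of a citation, and it also makes explicit the normalization $a,b\geq0$ that $ab^T\geq0$ permits --- a point the paper uses silently. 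The paper's route is shorter and rests on a stronger (entrywise, matrix-level) monotonicity fact, but that fact is imported rather than argued. Both proofs are valid.
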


\begin{proof}
Denote by the hat a principal submatrix, so that $\hat{A}$ is a principal submatrix of $A$, and $\widehat{(A^{-1})}$ analogously for $A^{-1}$. By \cite{JohSmi2011} we have $0\leq \hat{A}^{-1}\leq\widehat{(A^{-1})}$.
Further, denote by $\hat{a},\hat{b}$ the corresponding subvectors of $a,b$. Now,
\begin{align*}
0 \leq \tilde{b}^T\hat{A}^{-1}\tilde{a}
  \leq \tilde{b}^T\widehat{(A^{-1})}\tilde{a}
  \leq b^TA^{-1}a.
\end{align*}
Therefore the minimum value in \nref{d2ThmPmatParRankone} is $\frac{1}{b^TA^{-1}a}$; if $b^TA^{-1}a=0$ then it is $\infty$. The maximum value in \nref{d1ThmPmatParRankone} is $-\infty$ since $b^T\hat{A}^{-1}a\geq0$ for every $\hat{A}$.
\end{proof}

\paragraph{P-matrix radius.}
For a given matrix norm, define the radius of P-matrix property of $A$ as the distance to the nearest matrix that is not a P-matrix:
\begin{align*}
\delta^*:=  \sup\{\delta\geq0 \mmid 
  A+A'\mbox{ is an P-matrix }\forall A': \|A'\|<\delta\}.
\end{align*}

For any matrix norm, we can calculate the P-matrix radius by reduction to regularity radii in the same norm of its principal submatrices. 

\begin{theorem}
For any matrix norm we have 
$$
\delta^*= \min\{\rr(\hat{A})\mmid \hat{A}\mbox{ is a principal submatrix of }A\}.
$$
\end{theorem}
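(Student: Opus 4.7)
The plan is to prove both inequalities $\delta^* \geq r^*$ and $\delta^* \leq r^*$, where $r^* := \min\{\rr(\hat{A}) \mmid \hat{A}\text{ is a principal submatrix of }A\}$, using the defining characterization that $A+A'$ fails to be a P-matrix exactly when some principal submatrix $(A+A')_J$ has non-positive determinant.

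For the inequality $\delta^* \geq r^*$, I would proceed as in the proof of Theorem~\ref{thmDetRad}. Suppose $A+A'$ is not a P-matrix; then there exists an index set $J$ with $\det((A+A')_J) \leq 0$, while $\det(A_J) > 0$ since $A$ is a P-matrix. By continuity of the determinant, the polynomial $\alpha \mapsto \det(A_J + \alpha A'_J)$ has a zero at some $\alpha_0 \in (0,1]$, so $A_J + \alpha_0 A'_J$ is singular. Consequently,
$$
\rr(A_J) \leq \|\alpha_0 A'_J\| \leq \|A'_J\| \leq \|A'\|,
$$
where the last estimate is the submatrix-monotonicity property \nref{normAss1}. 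Hence every destabilizing $A'$ has $\|A'\| \geq r^*$, which gives $\delta^* \geq r^*$.

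For the reverse inequality $\delta^* \leq r^*$, I would exhibit an explicit perturbation. Let $J^*$ attain the minimum in the definition of $r^*$ and pick $\hat{C}$ with $\|\hat{C}\| = r^*$ such that $A_{J^*} + \hat{C}$ is singular. Define $A'$ by placing $\hat{C}$ in the $(J^*, J^*)$ principal block and zeros in all other positions. Then $(A+A')_{J^*} = A_{J^*} + \hat{C}$ is singular, so $A+A'$ has a vanishing principal minor and hence is not a P-matrix. For every induced $p$-norm, the Frobenius norm, the max-norm, and more generally for any norm under which zero-padding does not inflate the value, one has $\|A'\| = \|\hat{C}\| = r^*$, so this single $A'$ witnesses $\|A'\| < \delta$ for every $\delta > r^*$, giving $\delta^* \leq r^*$.

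The main obstacle is precisely this last norm issue. The bound $\|A'\| \geq \|\hat{C}\|$ is automatic from \nref{normAss1}, but the reverse inequality $\|A'\| \leq \|\hat{C}\|$---which is what makes the constructed perturbation have norm \emph{exactly} $r^*$---is not among the stated axioms of an abstract matrix norm; it does hold for all commonly used norms in the paper, which is the implicit content of \quo{for any matrix norm} in the theorem statement.
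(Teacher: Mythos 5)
Your proof is correct and takes essentially the same route as the paper: the paper's own one-line proof likewise reduces the P-matrix radius to the positive-determinant radius (equal to the regularity radius by Theorem~\ref{thmDetRad}) of every principal submatrix, and you have simply written out the two inequalities that this reduction hides. Your closing caveat is well founded and points to a gap in the theorem's \emph{statement} rather than in your argument: the direction $\delta^*\geq r^*$ needs restriction to not increase the norm (property~\nref{normAss1}), and the direction $\delta^*\leq r^*$ needs a singularity-producing perturbation of a principal submatrix to admit a full-size extension of essentially the same norm (e.g.\ norm-preserving zero padding); neither is an axiom of an abstract matrix norm, since a \quo{matrix norm} here is really a family of norms across dimensions. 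For instance, taking the absolute value on $1\times1$ matrices but $10\,\|\cdot\|_{\max}$ on $2\times2$ matrices (a legitimate consistent norm) and $A=I_2$ gives $\min\{\rr(\hat{A})\mmid \hat{A}\mbox{ principal submatrix}\}=1$ while the true P-matrix radius is $5$, so \quo{for any matrix norm} must implicitly be read as \quo{for any norm family satisfying \nref{normAss}-type compatibility together with norm-preservation under zero padding}, which covers all norms actually used in the paper.
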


\begin{proof}
The value $\rr(A)$ gives the distance to the nearest singular matrix, that is, the distance to the nearest matrix with non-positive determinant; see Theorem~\ref{thmDetRad}. This applies to  principal submatrices of $A$, too.
\end{proof}

For the spectral norm, Frobenius norm and some other orthogonally invariant matrix norms, the regularity radius is equal to given by the smallest singular value. Thus, we have as a consequence:

\begin{corollary}\label{corPmatRadSpectr}
For the spectral or Frobenius norm we have 
$$
\delta^* = \min\{\sigma_{\min}(\hat{A})\mmid
  \hat{A}\mbox{ is a principal submatrix of }A\}.
$$
\end{corollary}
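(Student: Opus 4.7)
My plan is to derive this as an immediate consequence of the preceding theorem combined with the identity $\rr(B)=\sigma_{\min}(B)$ which holds for the spectral and Frobenius norms (and, as noted in the Regularity radius paragraph, for any orthogonally invariant matrix norm).

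First I would invoke the preceding theorem to write
$$
\delta^* = \min\{\rr(\hat{A})\mmid \hat{A}\text{ is a principal submatrix of }A\}.
$$
Then, for each principal submatrix $\hat{A}$, I would substitute $\rr(\hat{A})=\sigma_{\min}(\hat{A})$, valid for both norms under consideration. Collecting the minima yields the stated formula.

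The only technical point that merits mention is that the preceding theorem was stated for a single fixed matrix norm on the ambient space $\R^{n\times n}$, whereas after restriction we measure perturbations of $\hat{A}$ in the same norm but on a smaller matrix space. For the Frobenius norm and the spectral norm this is harmless: padding a perturbation $\hat{A}'$ with zero rows and columns to form a perturbation of $A$ preserves the norm (the Frobenius norm because zero entries contribute nothing; the spectral norm because the singular values of the padded matrix coincide with those of $\hat{A}'$ together with additional zeros). Hence perturbations of $\hat{A}$ of a given norm lift to perturbations of $A$ of the same norm supported on the relevant index set, and the minimum distance interpretation is consistent.

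I do not expect any real obstacle here; the corollary is essentially a one-line deduction from the preceding theorem together with the standard orthogonally invariant identity for the regularity radius.
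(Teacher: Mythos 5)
Your proof is correct and is essentially the paper's own deduction: the corollary follows immediately from the preceding theorem by substituting $\rr(\hat{A})=\sigma_{\min}(\hat{A})$, which holds for the spectral and Frobenius norms as noted in the paper's discussion of the regularity radius. Your additional remark about zero-padding perturbations of principal submatrices (so that the restricted and ambient norms agree) is a careful point that the paper leaves implicit, but it does not change the route.
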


Unfortunately, there is no monotonicity of smallest singular values with respect to principal submatrices, so we have to inspect all of them in general.

\begin{example}
Consider the P-matrix
$$
A=\begin{pmatrix}10&5\\-5&1\end{pmatrix}.
$$
Then $\sigma_{\min}(A)\approx2.933$, but the principal submatrix $\hat{A}=(1)$ yields a smaller value of $\sigma_{\min}(\hat{A})$ and therefore $\delta^*=1$.
Let 
$$
A=\begin{pmatrix}10&2\\2&1\end{pmatrix}.
$$
Then $\sigma_{\min}(A)\approx0.5756$, which is the smallest one over all principal submatrices and thus $\delta^*\approx0.5756$. Indeed, if we construct from the SVD decomposition of $A$ the rank one approximation matrix
$$
B=\begin{pmatrix}\ 0.024804 & -0.116881 \\ -0.116881 &\ 0.550767\end{pmatrix},
$$
then $\sigma_{\min}(B)=\delta^*$ and $\det(A-B)=0$.
\end{example}

In some cases, however, it is not necessary to inspect all principal components.
For special orthogonally invariant matrix norms, only the largest one is sufficient.

\begin{theorem}\label{thmPmaceRadMmat}
Suppose $A$ is an M-matrix. For the spectral or Frobenius norm we have 
$$
\delta^*= \sigma_{\min}(A).
$$
\end{theorem}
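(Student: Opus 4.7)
The plan is to reduce to Corollary~\ref{corPmatRadSpectr}, so that it suffices to prove $\sigma_{\min}(A)\le\sigma_{\min}(\hat A)$ for every principal submatrix $\hat A$ of $A$. Since both $A$ and every principal submatrix of an M-matrix are nonsingular M-matrices, we may translate this into a statement about the spectral norm of the inverse: namely, it is equivalent to
$$
\|\hat A^{-1}\|_2 \le \|A^{-1}\|_2.
$$

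To get from $\hat A^{-1}$ to $A^{-1}$ I would go through the \emph{principal submatrix of the inverse}, $\widehat{A^{-1}}$, as an intermediate object. The Johnson--Smith inequality already cited in the paper gives
$$
0 \le \hat A^{-1} \le \widehat{A^{-1}}
$$
entrywise (both matrices are nonnegative because $A$ and $\hat A$ are nonsingular M-matrices). So the proof splits into two inequalities, $\|\hat A^{-1}\|_2 \le \|\widehat{A^{-1}}\|_2 \le \|A^{-1}\|_2$.

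The second of these is a general, norm-level fact: the spectral norm of any submatrix is bounded by the spectral norm of the whole matrix (embed $x$ by zeros to see that the induced norm can only grow). The first inequality is the monotonicity of the spectral norm on the cone of nonnegative matrices: if $0\le B\le C$ entrywise then $B^TB\le C^TC$ entrywise as well (each entry is a sum of products of nonnegative numbers), so by the Perron--Frobenius comparison $\rho(B^TB)\le\rho(C^TC)$, that is $\|B\|_2\le\|C\|_2$. Combining, $\sigma_{\min}(\hat A)=1/\|\hat A^{-1}\|_2 \ge 1/\|A^{-1}\|_2=\sigma_{\min}(A)$, and taking the minimum over principal submatrices yields $\delta^*=\sigma_{\min}(A)$; the Frobenius case is identical because Corollary~\ref{corPmatRadSpectr} already expresses $\delta^*$ through singular values.

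The only delicate point is the monotonicity $\|B\|_2\le\|C\|_2$ for $0\le B\le C$; everything else is direct bookkeeping. If one prefers to avoid the Perron--Frobenius step, the same inequality follows from $\|B\|_2=\max_{\|x\|_2=\|y\|_2=1} y^TBx \le \max_{\|x\|_2=\|y\|_2=1} |y|^TC|x| \le \|C\|_2$, using $B\le C$ and $B\ge 0$.
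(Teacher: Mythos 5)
Your proof is correct and takes essentially the same route as the paper: both rest on the Johnson--Smith inequality $0\le\hat A^{-1}\le\widehat{(A^{-1})}$, the monotonicity of the spectral norm on nonnegative matrices, and the fact that a submatrix has smaller norm, chained together to give $\sigma_{\min}(\hat A)\ge\sigma_{\min}(A)$ and then combined with Corollary~\ref{corPmatRadSpectr}. The only difference is that you also supply a proof of the monotonicity step, which the paper simply invokes as a known fact.
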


\begin{proof}
Denote by the hat a principal submatrix, so that $\hat{A}$ is a principal submatrix of $A$, and $\widehat{(A^{-1})}$ analogously for $A^{-1}$. By \cite{JohSmi2011} we have $0\leq \hat{A}^{-1}\leq\widehat{(A^{-1})}$. Then by the monotonicity of the spectral norm on nonnegative matrices we have 
\begin{equation*}
\rr(\hat{A})=\sigma_{\min}(\hat{A})
=\sigma^{-1}_{\max}(\hat{A}^{-1})
\geq \sigma^{-1}_{\max}\left(\widehat{(A^{-1})}\right)
\geq \sigma^{-1}_{\max}(A^{-1})
=\sigma_{\min}(A).
\qedhere
\end{equation*}
\end{proof}

\begin{theorem}
Suppose $A$ is (symmetric) positive definite. For the spectral or Frobenius norm we have 
$$
\delta^*= \lambda_{\min}(A).
$$
\end{theorem}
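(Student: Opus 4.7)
The plan is to invoke Corollary~\ref{corPmatRadSpectr}, which already gives
$$
\delta^*=\min\{\sigma_{\min}(\hat{A})\mmid \hat{A}\text{ is a principal submatrix of }A\},
$$
and then to argue that under symmetric positive definiteness the minimum on the right is attained at $\hat{A}=A$ itself, with value $\lambda_{\min}(A)$.

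First, I would observe that since $A$ is symmetric positive definite, every principal submatrix $\hat{A}$ is again symmetric positive definite (this is immediate from the characterization $x^T\hat{A}x>0$ applied to vectors $x$ supported on the corresponding index set). Consequently the singular values of $\hat{A}$ coincide with its eigenvalues, so $\sigma_{\min}(\hat{A})=\lambda_{\min}(\hat{A})$, and in particular $\sigma_{\min}(A)=\lambda_{\min}(A)$.

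Next, I would apply the Cauchy interlacing theorem for principal submatrices of a symmetric matrix: for any principal submatrix $\hat{A}$ of $A$ one has $\lambda_{\min}(A)\leq\lambda_{\min}(\hat{A})$. Combining this with the previous step gives
$$
\sigma_{\min}(\hat{A})=\lambda_{\min}(\hat{A})\geq\lambda_{\min}(A)=\sigma_{\min}(A)
$$
for every principal submatrix $\hat{A}$. Hence the minimum over principal submatrices in Corollary~\ref{corPmatRadSpectr} is attained at $\hat{A}=A$, yielding $\delta^*=\sigma_{\min}(A)=\lambda_{\min}(A)$.

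There is no real obstacle here: the only ingredient beyond the already proved corollary is Cauchy interlacing, which is standard. The mild point to be careful about is to note explicitly that singular values equal eigenvalues in the symmetric positive definite setting (so that Corollary~\ref{corPmatRadSpectr} — phrased in terms of $\sigma_{\min}$ — indeed reduces to a statement about $\lambda_{\min}$), and that the statement applies uniformly to the spectral and Frobenius norms because in both cases the regularity radius is given by the smallest singular value.
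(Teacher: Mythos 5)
Your proposal is correct and follows essentially the same route as the paper's own proof: apply Corollary~\ref{corPmatRadSpectr}, identify singular values with eigenvalues for the (positive definite) principal submatrices, and conclude via Cauchy interlacing that the minimum is attained at $A$ itself. The only difference is that you spell out explicitly why $\sigma_{\min}(\hat{A})=\lambda_{\min}(\hat{A})$ for every principal submatrix, a step the paper leaves implicit.
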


\begin{proof}
By Corollary~\ref{corPmatRadSpectr} and positive definiteness of $A$ we have
$$
\delta^* = \min\{\lambda_{\min}(\hat{A})\mmid
  \hat{A}\mbox{ is a principal submatrix of }A\}.
$$
Due to the Cauchy interlacing property of eigenvalues \cite{HorJoh1985}, $\lambda_{\min}(A)\leq\lambda_{\min}(\hat{A})$, whence $\delta^*= \lambda_{\min}(A)$.
\end{proof}

\begin{theorem}
Suppose $A$ is an M-matrix. For the max-norm we have 
$$
\delta^*= \frac{1}{e^TA^{-1}e}.
$$
\end{theorem}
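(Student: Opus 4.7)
The plan is to combine three ingredients already available in the paper: the reduction of the P-matrix radius to the minimum regularity radius over principal submatrices, the explicit max-norm formula \nref{radReg} for the regularity radius, and the Johnson--Smith inequality $0\leq \hat{A}^{-1}\leq \widehat{(A^{-1})}$ used in the proof of Theorem~\ref{thmPmaceRadMmat}. The strategy is to show that, for an M-matrix, the minimum of $\rr(\hat{A})$ over all principal submatrices is actually attained by $A$ itself, and then to simplify the max-norm regularity radius of $A$ using nonnegativity of $A^{-1}$.

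First I would note that every principal submatrix $\hat{A}$ of an M-matrix $A$ is itself an M-matrix, so $\hat{A}^{-1}\geq 0$ and in particular $\hat{A}$ is nonsingular. By the max-norm regularity formula \nref{radReg}, the maximum of $y^T\hat{A}^{-1}z$ over $y,z\in\{\pm1\}^{|\hat{A}|}$ is attained at $y=z=\hat{e}$ (the all-ones vector of appropriate length), because all entries of $\hat{A}^{-1}$ are nonnegative. Hence
\begin{align*}
\rr(\hat{A})=\frac{1}{\hat{e}^T\hat{A}^{-1}\hat{e}}.
\end{align*}

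Next I would compare $\rr(\hat{A})$ with $\rr(A)$. Using the entrywise inequality $\hat{A}^{-1}\leq \widehat{(A^{-1})}$ from \cite{JohSmi2011} together with $A^{-1}\geq 0$, I get
\begin{align*}
\hat{e}^T\hat{A}^{-1}\hat{e}
\leq \hat{e}^T\widehat{(A^{-1})}\hat{e}
\leq e^TA^{-1}e,
\end{align*}
where the last step drops the omitted nonnegative entries of $A^{-1}$. Taking reciprocals yields $\rr(\hat{A})\geq \rr(A)$ for every principal submatrix $\hat{A}$, so the minimum in the formula $\delta^*=\min\{\rr(\hat{A})\}$ is attained at $\hat{A}=A$, giving $\delta^*=\rr(A)=\frac{1}{e^TA^{-1}e}$.

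The argument is short and essentially mirrors the proof of Theorem~\ref{thmPmaceRadMmat}, simply replacing spectral-norm monotonicity on nonnegative matrices by the corresponding monotonicity of the bilinear form $y^T(\cdot)z$ on $\{\pm1\}^n$. The only potentially delicate point is justifying that $\max_{y,z\in\{\pm1\}^k}y^T\hat{A}^{-1}z=\hat{e}^T\hat{A}^{-1}\hat{e}$ for every principal submatrix, which is immediate from componentwise nonnegativity of $\hat{A}^{-1}$; once that is observed, no further obstacle remains.
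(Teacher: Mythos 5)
Your proof is correct and follows essentially the same route as the paper: both reduce the P-matrix radius to $\min\{\rr(\hat{A})\}$ over principal submatrices, use the max-norm formula \nref{radReg} together with $\hat{A}^{-1}\geq0$ to get $\rr(\hat{A})=1/(\hat{e}^T\hat{A}^{-1}\hat{e})$, and then apply the Johnson--Smith inequality $\hat{A}^{-1}\leq\widehat{(A^{-1})}$ to conclude $\rr(\hat{A})\geq\rr(A)=1/(e^TA^{-1}e)$. Your write-up just makes explicit two steps the paper leaves implicit (that principal submatrices of M-matrices are M-matrices, and that the $\{\pm1\}$-maximization is attained at the all-ones vectors).
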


\begin{proof}
For the same reasons as in the proof of Theorem~\ref{thmPmaceRadMmat} we derive
\begin{equation*}
\rr(\hat{A})
=\frac{1}{\max_{y,z\in\{\pm1\}^m}y^T\hat{A}^{-1}z}
=\frac{1}{e^T\hat{A}^{-1}e}
\geq\frac{1}{e^T\widehat{(A^{-1})}e}
\geq\frac{1}{e^TA^{-1}e}
=\rr(A).
\qedhere
\end{equation*}
\end{proof}

\section{M-matrices}\label{sMmat}

Recall that $A\in\R^{n\times n}$ is an M-matrix if $a_{ij}\leq0$ for every $i\not=j$ and $A^{-1}\geq0$. The latter can be replaced by many other equivalent conditions, we particularly utilize the condition $Av>0$ for certain $v>0$.

M-matrices form an important sub-class of P-matrices and they also appear in optimization in other situations \cite{Fie2006,Jac1977}, including stability of  Leontief's input-output analysis in economic systems.

\paragraph{Parametrization -- simple bounds.} 
Consider the parametrization of an M-matrix $A$ in the form $A_{\delta}=A-\delta \tilde{A}$, where $\tilde{A}\in\R^{n\times n}$ is given.
In the pursuit of finding the range of values, for which $A_{\delta}$ remains an M-matrix, we can easily construct an inner estimation.
The condition that the off-diagonal entries of $A_{\delta}$ should be non-positive yields a set of linear constraints on $\delta$.
Let $v>0$ such that $Av>0$. Then $A_{\delta}v>0$ gives us another linear constraint, from which we obtain:

\begin{theorem}\label{thmMmatPar}
$A_{\delta}$ is an M-matrix for each $\delta\in(\unum{\delta},\onum{\delta})$, where
\begin{align*}
\unum{\delta}
 &= \max\left\{ \max_{i\not=j, \tilde{a}_{ij}>0}\frac{a_{ij}}{\tilde{a}_{ij}},\ 
   \max_{i: \tilde{A}_{i*}v<0} \frac{A_{i*}v}{ \tilde{A}_{i*}v} \right\},\\
\onum{\delta}
&=\min\left\{ \min_{i\not=j, \tilde{a}_{ij}<0}\frac{a_{ij}}{\tilde{a}_{ij}},\ 
   \min_{i: \tilde{A}_{i*}v>0} \frac{A_{i*}v}{ \tilde{A}_{i*}v} \right\}.
\end{align*}
\end{theorem}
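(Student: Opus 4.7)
The plan is to verify directly the two defining conditions for $A_\delta=A-\delta\tilde A$ to be an M-matrix, namely (i) nonpositivity of off-diagonal entries, and (ii) existence of a positive vector $v$ with $A_\delta v>0$. The given interval $(\unum{\delta},\onum{\delta})$ is split into two pairs of bounds, each pair corresponding to one of these conditions, so I would handle them separately.

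First I would look at the off-diagonals. For $i\neq j$ one has $(A_\delta)_{ij}=a_{ij}-\delta\tilde a_{ij}$, and this must be $\leq 0$. Since $a_{ij}\leq 0$ already, the constraint is automatic when $\tilde a_{ij}=0$; when $\tilde a_{ij}>0$ it is equivalent to $\delta\geq a_{ij}/\tilde a_{ij}$, and when $\tilde a_{ij}<0$ it is equivalent to $\delta\leq a_{ij}/\tilde a_{ij}$. Taking the tightest lower and upper constraints over the relevant indices produces the first term in $\unum\delta$ and the first term in $\onum\delta$.

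Next I would treat the second condition using the fixed $v>0$ with $Av>0$. Computing rowwise, $(A_\delta v)_i = A_{i*}v-\delta\tilde A_{i*}v$, and positivity reduces to $\delta<A_{i*}v/\tilde A_{i*}v$ when $\tilde A_{i*}v>0$, to $\delta>A_{i*}v/\tilde A_{i*}v$ when $\tilde A_{i*}v<0$, and is automatic when $\tilde A_{i*}v=0$ (since $A_{i*}v>0$). Taking the worst of these over $i$ yields exactly the second terms inside the max/min defining $\unum\delta$ and $\onum\delta$. Combining with the off-diagonal analysis, every $\delta\in(\unum\delta,\onum\delta)$ makes $A_\delta$ an M-matrix (using the equivalent characterization \emph{$Av>0$ for some $v>0$} cited in the preliminaries), with the standard convention that an empty max is $-\infty$ and an empty min is $+\infty$.

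There is no substantive obstacle; the only mild subtleties are handling the zero cases ($\tilde a_{ij}=0$ and $\tilde A_{i*}v=0$) without division-by-zero, and remembering that the interval is open because the Av-type constraint is strict while the off-diagonal constraint is non-strict—so the open interval is the cleanest uniform statement. Since the theorem only claims an inner estimate, no converse is required, keeping the argument short.
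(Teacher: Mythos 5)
Your proof is correct and follows essentially the same route as the paper: the paper derives the theorem directly from the two linear systems of constraints on $\delta$ — nonpositivity of the off-diagonal entries of $A_\delta$ and positivity of $A_\delta v$ for the fixed $v>0$ — combined with the cited characterization that a matrix with nonpositive off-diagonal entries is an M-matrix iff it maps some positive vector to a positive vector. Your case analysis on the signs of $\tilde a_{ij}$ and $\tilde A_{i*}v$, including the empty max/min conventions and the remark on strict versus non-strict inequalities, just spells out in detail what the paper leaves implicit.
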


More precise estimation we get in the situation when $\tilde{A}$ has rank one, that is, it has the form of $\tilde{A}=ab^T$ for some $a,b\in\R^n$. In this case we have:

\begin{theorem}\label{thmMmatParRankone}
If $\tilde{A}=ab^T$, then the set of admissible values of $\delta$ for $A_{\delta}$ to be an M-matrix is described by linear constraints
\begin{align*}
\delta a_ib_j&\geq a_{ij},\quad \forall i\not=j,\\
\delta\left( (b^TA^{-1}a) A^{-1}- A^{-1}ab^TA^{-1}\right)&\leq A^{-1},\\
\delta b^TA^{-1}a&<1.
\end{align*}
\end{theorem}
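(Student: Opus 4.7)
The plan is to translate the two defining conditions of an M-matrix, namely non-positivity of off-diagonal entries and non-negativity of the inverse, into explicit linear inequalities in $\delta$ using the rank-one structure $\tilde{A}=ab^T$.

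First I would handle the sign pattern. The $(i,j)$-entry of $A_{\delta}$ for $i\neq j$ is $a_{ij}-\delta a_i b_j$, so $(A_\delta)_{ij}\leq 0$ is exactly $\delta a_i b_j \geq a_{ij}$, which yields the first block of constraints.

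Next I would deal with $A_{\delta}^{-1}\geq 0$. Since $A$ is an M-matrix, $\det(A)>0$, and by the Sherman--Morrison determinant formula used in the proof of Theorem~\ref{thmDetParRankone},
\begin{align*}
\det(A_{\delta})=\det(A)\bigl(1-\delta b^T A^{-1} a\bigr),
\end{align*}
so the necessary condition $\det(A_{\delta})>0$ (which is implied by $A_{\delta}$ being an M-matrix, via positive principal minors) gives precisely the third constraint $\delta b^T A^{-1} a < 1$. With this strict inequality the Sherman--Morrison inverse formula applies:
\begin{align*}
A_{\delta}^{-1}= A^{-1}+\frac{\delta\, A^{-1}ab^T A^{-1}}{1-\delta b^T A^{-1} a}.
\end{align*}
Since the denominator $1-\delta b^T A^{-1} a$ is positive, the condition $A_{\delta}^{-1}\geq 0$ is equivalent, after clearing denominators, to
\begin{align*}
(1-\delta b^T A^{-1}a)\,A^{-1}+\delta\, A^{-1}ab^T A^{-1}\geq 0,
\end{align*}
which rearranges to the second block $\delta\bigl((b^T A^{-1}a)A^{-1}-A^{-1}ab^T A^{-1}\bigr)\leq A^{-1}$.

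Finally I would argue the converse direction: if all three conditions hold, then $A_{\delta}$ has non-positive off-diagonal entries by the first block, is nonsingular with $A_{\delta}^{-1}\geq 0$ by the Sherman--Morrison identity together with the second and third conditions, and therefore is an M-matrix (the inverse-nonnegative characterization in the list given in the introduction). The delicate step, and the one I would be most careful with, is verifying that the sign of $1-\delta b^T A^{-1} a$ is indeed positive when clearing denominators; this is where the strict inequality in the third constraint is essential, and it must also be used to justify that $\det(A_{\delta})>0$ is equivalent to (rather than merely implied by) this constraint when combined with $A_{\delta}^{-1}\geq 0$.
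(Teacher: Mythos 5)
Your proof is correct and takes essentially the same route as the paper: the Sherman--Morrison inverse formula, with the denominator $1-\delta b^TA^{-1}a$ required positive so that clearing it is an equivalence, yielding the second and third constraint blocks. In fact you are slightly more thorough than the paper's own proof, which treats only the inverse-nonnegativity condition and silently omits both the off-diagonal sign constraints and the justification (via positivity of the determinant of an M-matrix) that the denominator must be positive in the \emph{only if} direction.
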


\begin{proof}
We have by the Sherman--Morrison formula 
\begin{align*}
A_{\delta}^{-1}
=(A-\delta ab^T)^{-1}
=A^{-1}+\frac{\delta}{1-\delta b^TA^{-1}a}A^{-1}ab^TA^{-1}.
\end{align*}
Thus we have $A_{\delta}^{-1}\geq0$ iff $(1-\delta b^TA^{-1}a)A^{-1}+\delta A^{-1}ab^TA^{-1}\geq0$ and the denominator $1-\delta b^TA^{-1}a$ is positive.
\end{proof}

From the linear constraints one easily derives the interval range of admissible values. The interval may be closed, open or semi-open since the description of the admissible values for $\delta$ contains both strict and non-strict inequalities.

\paragraph{Parametrization -- exact bounds.} 
By definition, $A_{\delta}$ is an M-matrix if the offdiagonal entries are non-positive and the real eigenvalues (as well as real parts of all eigenvalues) are nonnegative. The former is handled easily, and for the latter we simply control that $\det(A_{\delta})$ stays positive. Due to continuity of eigenvalues, this implies that all real eigenvalues (and real parts of all eigenvalues) stay positive. Therefore, we handle this problem by the techniques from Section~\ref{sDet}.

\paragraph{M-matrix radius.}
For a given matrix norm, define the radius of M-matrix property of $A$ as
\begin{align*}
\delta^*:=  \sup\{\delta\geq0 \mmid 
  A+A'\mbox{ is an M-matrix }\forall A': \|A'\|<\delta\}.
\end{align*}
Notice the fundamental difference to P-matrix radius of an M-matrix discussed in Theorem~\ref{thmPmaceRadMmat}. For example, for $A=I_n$, the P-matrix radius is $1$, but the M-matrix radius is $0$ as an arbitrarily small perturbation (e.g., increase of the off-diagonal entries) might violate M-matrix property.

Considering the max-norm, we are seeking for maximal $\delta\geq0$ such that all matrices inside the interval matrix $[A-\delta E,A+\delta E]$ are M-matrices. Due to monotonicity, the worst case is the matrix $A-\delta E$, so the radius $\delta^*$ is found by the parametrization discussed above. We have also to incorporate the condition on nonpositivity of off-diagonal entries, that is, the restriction $\delta\leq -\max_{i\not=j}a_{ij}$.

Consider now a more general class of norms. We introduce first some simple lower and upper bounds on $\delta^*$; we will build on them also in the next section.

\begin{theorem}\label{thmMmatRadLower}
Let $k>0$ be large enough. 
Then for every consistent matrix norm satisfying \nref{normAss} we have 
$$\textstyle
\delta^*\geq
 \min\left\{ k-\|kI_n-A\|,\ -\max_{i\not=j}a_{ij}\right\}.
$$
\end{theorem}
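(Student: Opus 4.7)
The plan is to invoke the equivalent characterization of an M-matrix given in the introduction: $M$ is an M-matrix iff $M_{ij} \leq 0$ for all $i \neq j$ and $M = kI_n - Q$ for some $Q \geq 0$ with $k > \rho(Q)$. Set $\delta := \min\{k - \|kI_n - A\|,\ -\max_{i\neq j} a_{ij}\}$, take any perturbation $A'$ with $\|A'\| < \delta$, and verify both conditions for $A + A' = kI_n - Q$, where $Q := kI_n - A - A'$.

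The first step is to check off-diagonal signs. Property \eqref{normAss} implies $|a'_{ij}| \leq \|A'\|$ for every entry: the $1 \times 1$ submatrix $(a'_{ij})$ has norm at most $\|A'\|$ by \eqref{normAss1}, and by \eqref{normAss2} combined with homogeneity its norm equals $|a'_{ij}|$. Consequently, for $i \neq j$,
$$
a_{ij} + a'_{ij} \leq a_{ij} + \|A'\| < a_{ij} + \delta \leq a_{ij} - \max_{k \neq l} a_{kl} \leq 0,
$$
so $A+A'$ has non-positive off-diagonals, and equivalently $Q$ has non-negative off-diagonals. Taking $k$ large enough additionally forces $Q_{ii} = k - a_{ii} - a'_{ii} \geq 0$, giving $Q \geq 0$.

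The second step is to bound $\rho(Q)$. Consistency of $\|\cdot\|$ gives $\rho(Q) \leq \|Q\|$, and applying the triangle inequality together with the definition of $\delta$ yields
$$
\rho(Q) \leq \|kI_n - A\| + \|A'\| < \|kI_n - A\| + \delta \leq \|kI_n - A\| + (k - \|kI_n - A\|) = k.
$$
Thus $k > \rho(Q)$, the M-matrix characterization is fulfilled for $A+A'$, and the lower bound on $\delta^*$ follows.

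I do not foresee a substantial obstacle. The only delicate point is the interpretation of "$k$ large enough", which must simultaneously ensure that the diagonal of $Q$ stays non-negative under all admissible perturbations; this is automatic once $k > \max_i a_{ii} + \delta$.
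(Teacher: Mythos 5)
Your proof is correct and takes essentially the same approach as the paper's: the same M-matrix characterization $A+A'=kI_n-Q$ with $Q\geq0$ and $\rho(Q)<k$, the same use of the norm assumptions \nref{normAss} to keep the off-diagonal entries of $A+A'$ non-positive (via $|a'_{ij}|\leq\|A'\|$), and the same consistency estimate $\rho(Q)\leq\|kI_n-A\|+\|A'\|<k$. The only difference is that you spell out the diagonal condition $Q_{ii}\geq0$ inside the proof, whereas the paper delegates the meaning of ``$k$ large enough'' to Remark~\ref{remKvalue}, which states that $k=2\max_i a_{ii}$ suffices.
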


\begin{proof}
Let $A'$ be a perturbation matrix. Then $A+A'$ is an M-matrix iff the offdiagonal entries are non-positive and for some $k$ it holds
\begin{align}\label{MmatRadEstim}
kI_n-A-A'\geq0,\ \ \rho(kI_n-A-A')<k.
\end{align}
In view of \nref{normAss}, for any $i\not=j$, we have $(A+A')_{ij}\leq0$ whenever $\|A'\|\leq -a_{ij}$.

Since 
\begin{align}\label{MmatRadCons}
\rho(kI_n-A-A')
\leq \|kI_n-A-A'\|
\leq \|kI_n-A\|+\|A'\|,  
\end{align}
the condition \nref{MmatRadEstim} is satisfied provided $\|A'\|\leq k-\|kI_n-A\|$.
\end{proof}

\begin{theorem}\label{thmMmatRadUpper}
For every consistent matrix norm satisfying \nref{normI} and \nref{normAss2} we have 
$$\textstyle
\delta^*\leq
 \min\left\{ k-\rho(kI_n-A),\ -\max_{i\not=j}a_{ij}\right\}.
$$
\end{theorem}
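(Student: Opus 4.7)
The plan is to prove the two upper bounds independently by exhibiting, for each $\delta$ strictly larger than the proposed bound, an explicit perturbation $A'$ with $\|A'\|<\delta$ for which $A+A'$ fails to be an M-matrix.

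For the bound $-\max_{i\neq j}a_{ij}$, I would target the off-diagonal sign condition. Picking indices $i\neq j$ that attain the maximum and writing $c:=-a_{ij}\geq 0$, for any $\epsilon>0$ the rank-one perturbation $A':=(c+\epsilon)e_i^{}e_j^T$ has norm $c+\epsilon$ by \nref{normAss2}, while $(A+A')_{ij}=\epsilon>0$ disqualifies $A+A'$ as an M-matrix. Letting $\epsilon\downarrow 0$ yields $\delta^*\leq c$.

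For the bound $k-\rho(kI_n-A)$, the idea is to drive $A+A'$ to be singular via a scalar shift. Taking $k$ large enough that $P:=kI_n-A\geq 0$ (matching the hypothesis of the companion Theorem~\ref{thmMmatRadLower}), the Perron--Frobenius theorem guarantees that $\rho(P)$ is itself an eigenvalue of $P$, so $\delta_0:=k-\rho(P)$ is an eigenvalue of $A$. Then $A':=-\delta_0 I_n$ satisfies $\|A'\|=\delta_0$ by \nref{normI}, while $A+A'$ inherits $0$ as an eigenvalue and is therefore singular; in particular it is not an M-matrix. Consequently, for any $\delta>\delta_0$ we exhibit $A'$ with $\|A'\|=\delta_0<\delta$ such that $A+A'\notin \mna{P}$, which proves $\delta^*\leq\delta_0$.

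The main subtlety is the reliance on nonnegativity of $P$: it is precisely because $P\geq 0$ that $\rho(P)$ is realised by a real eigenvalue, which is what allows the real scalar shift $-\delta_0 I_n$ to annihilate it. If $P$ had mixed signs, the extremal eigenvalue could be complex and a plain real shift would not render $A+A'$ singular, so a different perturbation would be required; this mirrors why the companion lower-bound theorem also assumes $k$ to be large enough.
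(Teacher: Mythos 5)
Your proof is correct and takes essentially the same approach as the paper: the paper likewise handles the off-diagonal bound with the rank-one perturbation $(\eps-a_{ij})e_i^{}e_j^T$ whose norm is fixed by \nref{normAss2}, and the spectral bound with a scalar multiple of the identity whose norm is fixed by \nref{normI}. The only cosmetic difference is that you justify the identity-shift bound by exhibiting the singular matrix $A-\bigl(k-\rho(kI_n-A)\bigr)I_n$ via Perron--Frobenius, whereas the paper phrases the same perturbation through the equivalence $\rho(kI_n-A+\beta I_n)<k \Leftrightarrow \rho(kI_n-A)+\beta<k$; both arguments rest on the same nonnegativity of $kI_n-A$ for $k$ large enough.
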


\begin{proof}
Consider the perturbation matrix in the form $A'=\beta I_n$, whence $\|A'\|=\beta$ by \nref{normI}.
The condition $\rho(kI_n-A+\beta I_n)<k$ holds if and only if $\rho(kI_n-A)+\beta <k$, yielding the upper bound $\beta< k-\rho(kI_n-A)$. Therefore also  $\delta^*\leq k-\rho(kI_n-A)$.

Now, consider the perturbation matrix in the form $A':=(\eps-a_{ij})e_i^{}e_j^T$, where $i\not=j$ and $\eps>0$ is arbitrarily small. Then $\|A'\|=\eps-a_{ij}$ by \nref{normAss2}. Since $(A+A')_{ij}=\eps$, it must $\delta^*<\eps-a_{ij}$. This holds for any $\eps>0$, so we can conclude $\delta^*\leq-a_{ij}$.
\end{proof}

\begin{remark}\label{remKvalue}
What is a sufficient value for $k$? The value of $k=2\max_i a_{ii}$ is sufficient as every perturbation matrix $A'$ satisfies $|a'|_{ii}\leq a_{ii}$, whence $kI_n-(A+A')\geq0$. Therefore the spectral radius of $kI_n-(A+A')$ is its largest eigenvalue and for all $k'>k$ we have $\rho(k'I_n-A-A')=\rho(kI_n-A-A')+k'-k$.
\end{remark}

Comparing the lower and upper bounds from Theorems~\ref{thmMmatRadLower} and~\ref{thmMmatRadUpper}, we see that the bounds are tight provided the matrix norm does not much overestimate the spectral radius.
If, for example, we employ the spectral norm and $A$ is a symmetric M-matrix, then both bounds are identical, yielding the exact value of $\delta^*$.

As for the parametrization, the exact M-matrix radius can be obtained by the reduction to sign stability of the determinant of the whole matrix.

\begin{theorem}\label{thmMmatRad}
For every matrix norm satisfying \nref{normAss} we have 
\begin{align*}\textstyle
\delta^*=\min
 \left\{\min_{i\not=j}\{-a_{ij}\},\rr(A)\right\}.
\end{align*}
\end{theorem}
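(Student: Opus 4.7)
The plan is to prove the equality by establishing both inequalities $\delta^* \leq \min\{\min_{i\neq j}\{-a_{ij}\}, \rr(A)\}$ and the matching lower bound, leveraging the two parts of assumption \nref{normAss} in distinct ways and exploiting the characterization of an M-matrix as a $Z$-matrix whose shifted companion $kI_n - A$ has spectral radius below $k$.

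For the upper bound, I would use two explicit perturbations. First, every M-matrix is nonsingular, so by Theorem~\ref{thmDetRad} any matrix $A+A'$ that is still an M-matrix must satisfy $\|A'\| < \rr(A)$; this yields the bound $\delta^* \leq \rr(A)$. Second, for each pair $i \neq j$ and each $\eps > 0$, the rank-one perturbation $A' := (\eps - a_{ij}) e_i^{} e_j^T$ has norm $\eps - a_{ij}$ by assumption \nref{normAss2}, but sets $(A+A')_{ij} = \eps > 0$, violating the $Z$-matrix property. Letting $\eps \downarrow 0$ yields $\delta^* \leq -a_{ij}$ for every off-diagonal pair.

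For the lower bound, suppose $\|A'\| < \min\{\min_{i\neq j}\{-a_{ij}\}, \rr(A)\}$. The submatrix assumption \nref{normAss1} applied to the $1 \times 1$ block $(a'_{ij})$ gives $|a'_{ij}| \leq \|A'\|$, which together with $\|A'\| < -a_{ij}$ guarantees that $A+A'$ is still a $Z$-matrix. To upgrade mere nonsingularity (granted by $\|A'\| < \rr(A)$) to full M-matrix status, I would connect $A$ to $A+A'$ by the straight-line homotopy $A(t) := A + tA'$ for $t \in [0,1]$. Choosing $k$ large enough in the spirit of Remark~\ref{remKvalue} makes $B(t) := kI_n - A(t)$ entrywise nonnegative throughout the homotopy, since every off-diagonal entry of $A(t)$ is at most $a_{ij} + \|A'\| < 0$. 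By Perron--Frobenius, $\rho(B(t))$ is itself an eigenvalue of $B(t)$, so $\rho(B(t)) = k$ would force $A(t)$ to be singular; but $\|tA'\| \leq \|A'\| < \rr(A)$ for all $t \in [0,1]$, so $A(t)$ stays nonsingular and $\rho(B(t)) \neq k$ everywhere. Since $\rho(B(0)) < k$ and $\rho(B(t))$ depends continuously on $t$, it cannot jump across $k$, so $\rho(B(1)) < k$, which means $A+A'$ is an M-matrix.

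The main obstacle is bridging the gap between nonsingularity of $A+A'$ and its M-matrix property: a generic nonsingular $Z$-matrix need not be an M-matrix (e.g.\ $-I_n$). The Perron--Frobenius argument along the straight-line homotopy is what resolves this, and its success hinges precisely on the fact that the whole segment $\{A(t)\}_{t\in[0,1]}$ lies simultaneously in the open ball of radius $\rr(A)$ around $A$ and inside the $Z$-matrix cone; the two norm assumptions \nref{normAss1} and \nref{normAss2} are what ensure these two properties, respectively, from the single hypothesis on $\|A'\|$.
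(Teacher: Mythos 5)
Your proposal is correct and takes essentially the same route as the paper's proof: the same rank-one perturbations $(\eps-a_{ij})e_i^{}e_j^T$ and the nonsingularity of M-matrices give the upper bound, and assumptions \nref{normAss1} and \nref{normAss2} play exactly the roles you assign them for the off-diagonal entries. Your Perron--Frobenius homotopy argument is simply a careful, rigorous expansion of the step the paper states tersely --- that keeping $\det(A+A')$ positive (equivalently, staying within $\rr(A)$) forces all eigenvalues to remain positive by continuity, so a perturbed Z-matrix stays an M-matrix.
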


\begin{proof}
Denote by $\delta^*_1$ the radius of non-positivity of the offdiagonal entries. Then $\delta^*_1=\min_{i\not=j}\{-a_{ij}\}$. For $i\not=j$ and the perturbation matrix $A':=(\eps-a_{ij})e_i^{}e_{j}^T$, $\eps>0$, we have $(A+A')_{ij}=\eps$. By \nref{normAss2} we also have $\|A'\|=\eps-a_{ij}$, so $\delta^*_1\leq -a_{ij}$. On the other hand, for every perturbation matrix $A'$ such that $\|A'\|\leq-a_{ij}$ we have by \nref{normAss1} $a'_{ij}\leq -a_{ij}$, so $(A+A')_{ij}\leq0$.

The formula $\delta^*=\min\{\delta^*_1,\rr(A)\}$ now follows from the fact that it is sufficient to keep the determinant of $A+A'$ positive since then all eigenvalues also remain positive and thus $A+A'$ stays to be an M-matrix.
\end{proof}

\begin{corollary}
For the spectral or Frobenius norm, we have
\begin{align*}\textstyle
\delta^*=\min_{i\not=j}\{-a_{ij},\sigma_{\min}(A)\}.
\end{align*}
\end{corollary}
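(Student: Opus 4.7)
The corollary is a direct specialization of Theorem~\ref{thmMmatRad}, so the plan is essentially to verify that the hypotheses of Theorem~\ref{thmMmatRad} apply to the spectral and Frobenius norms, and then substitute the known expression for the regularity radius in these norms.

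First I would check that both the spectral norm and the Frobenius norm satisfy the conditions \nref{normAss1} and \nref{normAss2}. For the Frobenius norm this is immediate: the sum of squared entries of a submatrix cannot exceed that of the full matrix, and $\|e_i^{}e_j^T\|_F=1$. For the spectral norm, the submatrix inequality $\|A'\|_2\leq\|A\|_2$ follows from the variational characterization $\|A\|_2=\max_{\|x\|_2=\|y\|_2=1} y^TAx$ by padding optimal vectors for the submatrix with zeros, and $\|e_i^{}e_j^T\|_2=1$ since $e_i^{}e_j^T$ has a single nonzero singular value equal to $1$. The paper has already remarked that induced $p$-norms (which include the spectral norm) and the Frobenius norm satisfy \nref{normAss}, so this step is routine.

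Next, I would invoke Theorem~\ref{thmMmatRad} to conclude
\begin{align*}
\delta^*=\min\left\{\min_{i\not=j}\{-a_{ij}\},\,\rr(A)\right\}.
\end{align*}
Finally, as recalled in the paragraph on regularity radius, for the spectral norm, the Frobenius norm, and more generally any orthogonally invariant matrix norm, the regularity radius coincides with the smallest singular value: $\rr(A)=\sigma_{\min}(A)$. Substituting this into the formula from Theorem~\ref{thmMmatRad} yields
\begin{align*}
\delta^*=\min_{i\not=j}\{-a_{ij},\,\sigma_{\min}(A)\},
\end{align*}
which is the claim.

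There is no real obstacle here — the corollary is a direct combination of Theorem~\ref{thmMmatRad} with the Gastinel--Kahan type identity $\rr(A)=\sigma_{\min}(A)$ available for these particular norms. The only point requiring any care is verifying the norm-theoretic hypotheses of Theorem~\ref{thmMmatRad}, and both norms comfortably fit the framework.
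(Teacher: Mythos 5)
Your proof is correct and follows exactly the route the paper intends: the corollary is stated without proof precisely because it is the immediate specialization of Theorem~\ref{thmMmatRad} (both norms satisfy \nref{normAss}, as the paper notes) combined with the fact that $\rr(A)=\sigma_{\min}(A)$ for the spectral and Frobenius norms. Your explicit verification of the norm hypotheses is a reasonable bit of added care, but nothing in your argument departs from the paper's implicit one.
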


\begin{example}\label{exMmatRad}
Consider the matrix $A$ and the spectral norm,
$$
A=\begin{pmatrix}10&-2\\-1&10\end{pmatrix}.
$$
Considering $A$ as an M-matrix, the lower bound (Theorem~\ref{thmMmatRadLower}),  the upper bound (Theorem~\ref{thmMmatRadUpper}) and the exact value (Theorem~\ref{thmMmatRad}) are the same, $\delta^*_M=1$. This is due to the $(2,1)$-entry of the matrix, which needs to be nonpositive.

Considering the matrix as a P-matrix, we obtain a larger radius. The P-matrix radius is $\delta^*_P\approx8.5125$, attained as the 2-norm of the whole matrix.

Let now
$$
B=\begin{pmatrix}20&-12\\-11&20\end{pmatrix}.
$$
The lower bound, the upper bound and the exact value of the M-matrix radius are $8.4938\leq \delta^*_M=8.5062\leq 8.5109$. We see that both lower and upper bounds are quite tight.
Considering this matrix as a P-matrix, the corresponding P-matrix radius has the same value of $\delta^*_P=8.5062$. Therefore, the nearest matrix to $B$ that is not an M-matrix is also not a P-matrix.
\end{example}

\section{H-matrices}\label{sHmat}

Recall that a matrix $A\in\R^{n\times n}$ is called an \emph{H-matrix} if the so called \emph{comparison matrix} $\langle A\rangle$ is an M-matrix, where $\langle A\rangle_{ii}=|a_{ii}|$ and $\langle A\rangle_{ij}=-|a_{ij}|$ for $i\not=j$.
H-matrices with positive diagonal represent a large class of efficiently verifiable P-matrices. 

\paragraph{Parametrization.}
Consider the parametrization of an H-matrix $A$ in the form $A_{\delta}=A-\delta \tilde{A}$, where $\tilde{A}\in\R^{n\times n}$ is given. For a sufficiently small $\delta$, the comparison matrix of $A_{\delta}$ draws $\langle A-\delta \tilde{A}\rangle=A'-\delta \tilde{A}'$ and both matrices $A'$ and $\tilde{A}'$ are trivially derived. Thus the problem of determining the set of admissible values of $\delta$, for which $A_{\delta}$ remains an H-matrix, directly reduces to the previous case of M-matrices. In this way, adaptation of Theorem~\ref{thmMmatPar} for a lower bound on the maximal admissible value $\delta^*$ takes the following form.

\begin{theorem}
Let $v>0$ such that $\langle A\rangle>0$. Then $A_{\delta}$ is an H-matrix for each $\delta\in(\unum{\delta},\onum{\delta})$, where
\begin{align*}
\unum{\delta} &= 
 \max\left\{ \max_{i\not=j, \tilde{a}'_{ij}>0}\frac{a'_{ij}}{\tilde{a}'_{ij}},\ 
 \max_{ i: \tilde{A}'_{i*}v<0} \frac{A'_{i*}v}{ \tilde{A}'_{i*}v} \right\},\\
\onum{\delta} &= 
 \min\left\{ \min_{i\not=j, \tilde{a}'_{ij}<0}\frac{a'_{ij}}{\tilde{a}'_{ij}},\  
 \min_{i: \tilde{A}'_{i*}v>0} \frac{A'_{i*}v}{ \tilde{A}'_{i*}v}\right\}.
\end{align*}
\end{theorem}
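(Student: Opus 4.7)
The plan is to reduce the H-matrix parametrization to the M-matrix case already handled in Theorem~\ref{thmMmatPar}. By definition, $A_\delta$ is an H-matrix if and only if the comparison matrix $\langle A_\delta\rangle$ is an M-matrix, so the whole task is to write $\langle A_\delta\rangle$ as a matrix of the form $A' - \delta \tilde A'$ with explicitly computable $A'$ and $\tilde A'$, and then invoke Theorem~\ref{thmMmatPar}.

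First I would exhibit the sign-based linearisation of the comparison matrix. Provided that $\delta$ is small enough so that no entry $a_{ij} - \delta \tilde a_{ij}$ has switched sign compared with $a_{ij}$, one can write
\begin{align*}
|a_{ii} - \delta \tilde a_{ii}| &= |a_{ii}| - \delta\,\sgn(a_{ii})\tilde a_{ii}, \\
-|a_{ij} - \delta \tilde a_{ij}| &= -|a_{ij}| + \delta\,\sgn(a_{ij})\tilde a_{ij} \quad (i\neq j).
\end{align*}
Hence $\langle A_\delta\rangle = A' - \delta \tilde A'$, where $A' = \langle A\rangle$, $\tilde a'_{ii} = \sgn(a_{ii})\tilde a_{ii}$, and $\tilde a'_{ij} = -\sgn(a_{ij})\tilde a_{ij}$ for $i\neq j$; entries with $a_{ij}=0$ are handled by setting $\sgn(0)$ so that the representation agrees with the one-sided derivative at $\delta=0^+$ (and the analogous choice for $\delta<0$).

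Next, since $A$ is an H-matrix, $A' = \langle A\rangle$ is an M-matrix, so there exists $v>0$ with $A' v > 0$; this is exactly the hypothesis of the theorem. Applying Theorem~\ref{thmMmatPar} to the parametrisation $A' - \delta \tilde A'$ yields an open interval of values $\delta$ for which $\langle A_\delta\rangle$ is an M-matrix, and substituting the primed quantities into the formulas of Theorem~\ref{thmMmatPar} produces exactly $(\unum{\delta},\onum{\delta})$ as stated. Within this interval, $\langle A_\delta\rangle v = A'v - \delta \tilde A' v > 0$ and $\langle A_\delta\rangle_{ij} \leq 0$ for $i\neq j$, certifying the M-matrix property.

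The only subtlety, and what I expect to be the main obstacle, is checking that the derived bounds really keep us inside the sign-preserving region where the identity $\langle A_\delta\rangle = A' - \delta \tilde A'$ holds. This is self-consistent, though: the off-diagonal constraint $a'_{ij} - \delta \tilde a'_{ij} \leq 0$ appearing in Theorem~\ref{thmMmatPar} translates back (via the definitions of $a'_{ij}$ and $\tilde a'_{ij}$) precisely to $|a_{ij} - \delta \tilde a_{ij}| = \sgn(a_{ij})(a_{ij} - \delta \tilde a_{ij})$, i.e.\ to the sign of $a_{ij} - \delta\tilde a_{ij}$ still agreeing with that of $a_{ij}$. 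Consequently the derived interval cannot extend beyond a sign flip, and throughout it $\langle A_\delta\rangle$ really is the linear object to which Theorem~\ref{thmMmatPar} was applied, completing the proof.
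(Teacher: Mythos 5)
Your proof is correct and follows essentially the same route as the paper: the paper states this theorem without a formal proof, merely observing beforehand that for sufficiently small $\delta$ one has $\langle A-\delta\tilde{A}\rangle=A'-\delta\tilde{A}'$ with $A'$ and $\tilde{A}'$ ``trivially derived,'' and then reads off the bounds from Theorem~\ref{thmMmatPar}. Your explicit sign-based construction of $\tilde{A}'$ and the self-consistency check that the off-diagonal constraints of Theorem~\ref{thmMmatPar} confine $\delta$ to the sign-preserving region (so that the linear matrix really equals $\langle A_{\delta}\rangle$ throughout the interval) supply precisely the details the paper leaves implicit.
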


Notice that the bound of $\unum{\delta}$ or $\onum{\delta}$ may be caused due to the change of the sign of an offdiagonal entry. This is a restriction for M-matrices, but not for H-matrices. Therefore, it makes sense to put $\delta:=\unum{\delta}$ or $\delta:=\onum{\delta}$ and to repeat this process again with updated $\tilde{A}'$. Since the change of the sign may happen only once for each offdiagonal entry, there are possible at most $n^2-n$ iterations.

\begin{example}\label{exHmatPar}
Let
$$
A=\begin{pmatrix}10&-2\\-1&10\end{pmatrix},\quad
\tilde{A}=\begin{pmatrix}1&1\\1&1\end{pmatrix}.
$$

\emph{M-matrix parametrization. }
The matrix $A$ is an M-matrix, which can be confirmed by verifying $Av>0$ for $v=(1,1)^T$, for example. Applying the method for computing an interval of admissible values for the M-matrix property, we calculate $[\unum{\delta},\onum{\delta}]=[-1,4]$. We can consider the closed interval since the endpoints are attained. 

By using the second method via determinants, by obtain tighter (exact) bounds. The condition $\det(A_{\delta})>0$ leads to the bound $\delta<\frac{98}{23}\approx4.26$, and the other conditions $\det(A^{ij}_{\delta})>0$ do not change it. The nonpositivity of the offdiagonal entries yield the lower bound $\delta\geq-1$. In total, the range of admissible values is $[-1,\frac{98}{23})$.

If we take into account that $\tilde{A}$ has rank one and utilizing Theorem~\ref{thmMmatParRankone}, then we have the interval of admissible values $[\unum{\delta},\onum{\delta})=[-1,\frac{98}{23})$ in the first step. The right end-point $\frac{98}{23}\approx4.26$  is now optimal, and it is not attained.

\emph{H-matrix parametrization. }
If we consider the matrix $A$ from the above example as an H-matrix, we can derive a larger interval of admissible values. In the first iteration, we arrive at the same interval $[\unum{\delta},\onum{\delta}]=[-1,4]$. For $\delta=4$, no improvement happens, but for $\delta=-1$, we obtain a new interval of admissible values $[-2,4]$. For $\delta=-2$, we get a resulting interval $(-\infty,4]$. Thus, $A_{\delta}$ is an H-matrix for all $\delta\in(-\infty,4]$. Due to the heuristic nature of the method, this interval is not optimal. 
\end{example}

\paragraph{H-matrix radius.}
For a given matrix norm, define the radius of H-matrix property of $A$ as
\begin{align*}
\delta^*:=  \sup\{\delta\geq0 \mmid 
  A+A'\mbox{ is an H-matrix }\forall A': \|A'\|<\delta\}.
\end{align*}

Concerning the max-norm, $\delta^*$ is simply found by the parametrization applied to the parametric matrix  $\langle A\rangle-\delta E$.

We present various lower and upper bounds for various matrix norms.
The value of $k$ is large enough; see Remark~\ref{remKvalue}.

\begin{theorem}\label{thmHmatRadLowerAbs}
For every consistent monotone absolute matrix norm we have 
$$\textstyle
\delta^* \geq k-\| kI_n-\langle A\rangle \|.
$$
\end{theorem}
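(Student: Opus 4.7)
The plan is to imitate the proof of Theorem~\ref{thmMmatRadLower}, but working with the comparison matrix in place of the matrix itself. Given a perturbation $A'$ with $\|A'\| < k-\|kI_n-\langle A\rangle\|$, I want to conclude that $\langle A+A'\rangle$ is an M-matrix, equivalently (by the third characterization in the introduction) that $\langle A+A'\rangle = kI_n - P$ for some $P\geq 0$ with $\rho(P)<k$. With $k$ chosen as in Remark~\ref{remKvalue}, the nonnegativity $P = kI_n - \langle A+A'\rangle \geq 0$ comes for free, so the real work is to establish $\rho(kI_n-\langle A+A'\rangle) < k$.

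The key intermediate step is the entrywise bound
$$
|\langle A\rangle - \langle A+A'\rangle| \leq |A'|.
$$
This is a routine case split: on the diagonal it reduces to $\big||a_{ii}|-|a_{ii}+a'_{ii}|\big|\leq|a'_{ii}|$ (reverse triangle inequality), and off the diagonal to $\big||a_{ij}|-|a_{ij}+a'_{ij}|\big|\leq|a'_{ij}|$ (triangle inequality). Because the norm is assumed absolute and monotone, this entrywise bound lifts to $\|\langle A\rangle - \langle A+A'\rangle\| \leq \||A'|\| = \|A'\|$.

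With this in hand, I would combine consistency of the norm, the triangle inequality, and the hypothesis on $\|A'\|$ exactly as in \nref{MmatRadCons}:
\begin{align*}
\rho(kI_n-\langle A+A'\rangle)
&\leq \|kI_n-\langle A+A'\rangle\| \\
&\leq \|kI_n-\langle A\rangle\| + \|\langle A\rangle - \langle A+A'\rangle\| \\
&\leq \|kI_n-\langle A\rangle\| + \|A'\| < k.
\end{align*}
Thus $\langle A+A'\rangle$ is an M-matrix and $A+A'$ is an H-matrix, giving $\delta^*\geq k-\|kI_n-\langle A\rangle\|$.

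The only genuinely new ingredient compared with the M-matrix case is the entrywise comparison of $\langle A\rangle$ with $\langle A+A'\rangle$, and this is precisely where the \emph{absolute} and \emph{monotone} hypotheses on the norm are used; everything else is a direct transcription of the M-matrix argument. I do not expect any serious obstacle: the subtle points are merely (i) invoking Remark~\ref{remKvalue} to justify the choice of $k$, and (ii) noting that the offdiagonal nonpositivity that had to be enforced separately for M-matrices is automatic here because comparison matrices always have nonpositive offdiagonal entries.
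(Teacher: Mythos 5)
Your proposal is correct and follows essentially the same route as the paper: both reduce the claim to the M-matrix characterization $\langle A+A'\rangle = kI_n - P$, $P\geq 0$, $\rho(P)<k$, and both control the spectral radius via consistency, the triangle inequality, and the bound $\|\langle A+A'\rangle-\langle A\rangle\|\leq\||A'|\|=\|A'\|$ obtained from monotonicity and absoluteness. The only difference is cosmetic: you spell out the entrywise estimate $|\langle A+A'\rangle-\langle A\rangle|\leq|A'|$ and the role of Remark~\ref{remKvalue}, which the paper leaves implicit.
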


\begin{proof}
Let $A'$ be a perturbation matrix. In order that $\langle A+ A'\rangle=\langle A\rangle+\tilde{A}'$ is an M-matrix, it must hold
\begin{align}\label{ineqPfThmHmatRadCOnsMonAbs}
kI_n-\langle A\rangle-\tilde{A}'\geq0,\ \ \rho(kI_n-\langle A\rangle-\tilde{A}')<k.
\end{align}
Since 
\begin{align*}
\rho(kI_n-\langle A\rangle-\tilde{A}')
&\leq \|kI_n-\langle A\rangle-\tilde{A}'\|
 \leq  \|kI_n-\langle A\rangle\|+\||\tilde{A}'|\|\\
&\leq \|kI_n-\langle A\rangle\|+\||A'|\|, 
\end{align*}
the condition \nref{ineqPfThmHmatRadCOnsMonAbs} is satisfied as long as $\|A'\|=\||A'|\|< k-\|kI_n-\langle A\rangle\|$.
\end{proof}

Since the spectral norm is not covered in the above statement, we derive a lower bound separately.

\begin{theorem}\label{thmHmatRadLower}
Let $\|\cdot\|$ be the Frobenius or the induced 1-~or $\infty$-norm. Then the H-matrix radius for the spectral norm satisfies
$$
\delta^*
 \geq \frac{1}{\sqrt{n}}(k-\| kI_n-\langle A\rangle \|)
 \geq \frac{k}{\sqrt{n}}-\| kI_n-\langle A\rangle \|_2.
$$
\end{theorem}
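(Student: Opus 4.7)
The plan is to deduce the spectral-norm bound from Theorem~\ref{thmHmatRadLowerAbs}, which already handles consistent monotone absolute matrix norms. The Frobenius norm and the induced $1$- and $\infty$-norms all belong to that class, so Theorem~\ref{thmHmatRadLowerAbs} applies to each of them and gives the required H-matrix certificate as soon as the perturbation is small in the corresponding norm.

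The bridge between the spectral norm and these three norms is the standard equivalence
\begin{align*}
\|B\|_F &\leq \sqrt{n}\,\|B\|_2,\quad
\|B\|_1 \leq \sqrt{n}\,\|B\|_2,\quad
\|B\|_\infty \leq \sqrt{n}\,\|B\|_2,
\end{align*}
valid for every $B\in\R^{n\times n}$. (For Frobenius this follows from $\|B\|_F^2=\sum_i\sigma_i^2\leq n\sigma_{\max}^2$; for the induced $\infty$-norm from $\|B\|_\infty=\max_i\|e_i^TB\|_1\leq\sqrt{n}\max_i\|e_i^TB\|_2\leq\sqrt{n}\|B\|_2$; the $1$-norm case is analogous via $\|B\|_1=\|B^T\|_\infty$.) Writing $\|\cdot\|$ for any one of the three chosen norms, we therefore have $\|A'\|\leq \sqrt{n}\,\|A'\|_2$ for every perturbation $A'$.

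The first step is then the following. Suppose $\|A'\|_2 < \frac{1}{\sqrt{n}}(k-\|kI_n-\langle A\rangle\|)$. The equivalence above yields $\|A'\| < k-\|kI_n-\langle A\rangle\|$, and Theorem~\ref{thmHmatRadLowerAbs} (with the norm $\|\cdot\|$, which is consistent, monotone and absolute) guarantees that $A+A'$ is an H-matrix. This proves the first inequality
\[
\delta^* \geq \tfrac{1}{\sqrt{n}}\bigl(k-\|kI_n-\langle A\rangle\|\bigr).
\]

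The second inequality is a purely algebraic weakening. Applying the same norm equivalence to the fixed matrix $kI_n-\langle A\rangle$ gives $\|kI_n-\langle A\rangle\|\leq \sqrt{n}\,\|kI_n-\langle A\rangle\|_2$, and substituting this into the right-hand side of the first inequality yields $\tfrac{1}{\sqrt{n}}\bigl(k-\|kI_n-\langle A\rangle\|\bigr) \geq \tfrac{k}{\sqrt{n}}-\|kI_n-\langle A\rangle\|_2$, which is the claimed second bound. I do not expect a real obstacle here; the only thing to be careful about is that Theorem~\ref{thmHmatRadLowerAbs} is invoked with a norm (Frobenius / $1$ / $\infty$) that satisfies its hypotheses, which is exactly why the spectral norm cannot be used directly and why the $\sqrt{n}$-factor loss is essentially unavoidable in this approach.
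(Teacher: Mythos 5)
Your proposal is correct and follows essentially the same route as the paper: the paper's own proof simply repeats the spectral-radius argument of Theorem~\ref{thmHmatRadLowerAbs} inline (bounding $\rho(kI_n-\langle A\rangle-\tilde{A}')$ via the chosen consistent monotone absolute norm) and then appends the equivalence $\|A'\|\leq\sqrt{n}\,\|A'\|_2$, which is exactly what you do, only you invoke Theorem~\ref{thmHmatRadLowerAbs} as a black box instead of re-deriving it. Your version is, if anything, slightly more complete, since you spell out the norm-equivalence step that justifies the second inequality, which the paper leaves implicit.
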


\begin{proof}
Let $A'$ be a perturbation matrix. In order that $\langle A+ A'\rangle=\langle A\rangle+\tilde{A}'$ is an M-matrix, it must hold
\begin{align}\label{ineqPfThmHmatRadCOnsMon}
kI_n-\langle A\rangle-\tilde{A}'\geq0,\ \ \rho(kI_n-\langle A\rangle-\tilde{A}')<k.
\end{align}
Since 
\begin{align*}
\rho(kI_n-\langle A\rangle-\tilde{A}')
&\leq \|kI_n-\langle A\rangle-\tilde{A}'\|
 \leq \|kI_n-\langle A\rangle\|+\|\tilde{A}'\|\\
& \leq \|kI_n-\langle A\rangle\|+\|A'\|
 \leq \|kI_n-\langle A\rangle\|+\sqrt{n}\|A'\|_2, 
\end{align*}
the condition \nref{ineqPfThmHmatRadCOnsMon} is satisfied whenever $\sqrt{n}\|A'\|_2< k-\|kI_n-\langle A\rangle\|$.
\end{proof}

\begin{theorem}\label{thmHmatRadUpper}
For every consistent matrix norm satisfying \nref{normI} we have
$$\textstyle
\delta^*\leq  k-\rho(kI_n-\langle A\rangle).
$$
\end{theorem}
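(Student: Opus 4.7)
The plan is to construct a specific perturbation achieving the claimed bound, paralleling the approach of Theorem~\ref{thmMmatRadUpper}. The natural candidate is the diagonal perturbation $A' := -\beta I_n$ for $\beta > 0$, which by \nref{normI} satisfies $\|A'\| = \beta$.

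First I would observe that, under the implicit positive-diagonal setting for H-matrices (cf.\ Remark~\ref{remKvalue}), for $\beta \leq \min_i a_{ii}$ each diagonal entry satisfies $|a_{ii} - \beta| = |a_{ii}| - \beta$, while the off-diagonal entries are unchanged. Hence $\langle A + A'\rangle = \langle A\rangle - \beta I_n$. Writing this as $(k-\beta) I_n - (kI_n - \langle A\rangle)$ with $kI_n - \langle A\rangle \geq 0$ for $k$ sufficiently large, the M-matrix condition reduces to $k - \beta > \rho(kI_n - \langle A\rangle)$, equivalently $\beta < k - \rho(kI_n - \langle A\rangle)$.

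Consequently, for any $\beta \geq k - \rho(kI_n - \langle A\rangle)$, the matrix $A' = -\beta I_n$ has $\|A'\| = \beta$ and $\langle A+A'\rangle$ fails to be an M-matrix, so $A+A'$ is not an H-matrix. Taking $\beta$ arbitrarily close to $k - \rho(kI_n - \langle A\rangle)$ from above yields $\delta^* \leq k - \rho(kI_n - \langle A\rangle)$.

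The main loose end is to verify that the side condition $\beta \leq \min_i a_{ii}$ at the critical value is automatic. I would apply Perron--Frobenius to the nonnegative matrix $kI_n - \langle A\rangle$, using that its spectral radius dominates each of its diagonal $1 \times 1$ principal submatrices, giving $\rho(kI_n - \langle A\rangle) \geq \max_i (k - |a_{ii}|) = k - \min_i a_{ii}$, so indeed $k - \rho(kI_n - \langle A\rangle) \leq \min_i a_{ii}$. The only real subtlety is the positive-diagonal hypothesis: for H-matrices with arbitrary signs on the diagonal, one would instead take $A' = -\beta \diag(\sgn(a_{ii}))$, but justifying $\|A'\| = \beta$ in that case requires slightly more than bare consistency plus \nref{normI}, although it remains valid for induced $p$-norms, the max-norm, and the other common norms in play.
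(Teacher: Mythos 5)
Your proof follows essentially the same route as the paper's: the paper likewise perturbs along the identity (writing $A'=\beta I_n$ but, judging by its condition $\rho(kI_n-\langle A\rangle+\beta I_n)<k$, effectively taking $\langle A+A'\rangle=\langle A\rangle-\beta I_n$) and reads off the bound $\beta<k-\rho(kI_n-\langle A\rangle)$, exactly as you do. The extra points you supply---the Perron--Frobenius check that $k-\rho(kI_n-\langle A\rangle)\leq\min_i|a_{ii}|$, which guarantees $\langle A-\beta I_n\rangle=\langle A\rangle-\beta I_n$ at the critical value, and the signature-matrix fix $A'=-\beta\diag(\sgn(a_{ii}))$ when the diagonal has mixed signs (with the attendant caveat that $\|A'\|=\beta$ then needs more than consistency plus \nref{normI})---are precisely the details the paper's terser proof leaves implicit, so your version is, if anything, more complete.
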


\begin{proof}
Consider the perturbation matrix in the form $A'=\beta I_n$, whence $\|A'\|=\beta$.
The condition $\rho(kI_n-\langle A\rangle+\beta I_n)<k$ holds as long as $\rho(kI_n-\langle A\rangle)+\beta <k$, yielding the upper bound $\beta< k-\rho(kI_n-\langle A\rangle)$. Therefore also  $\delta^*\leq k-\rho(kI_n-\langle A\rangle)$.
\end{proof}


\begin{theorem}\label{thmHmatRadRR}
For any matrix norm we have $\delta^* \leq \rr(A)$.
For any monotone absolute matrix norm we have $\delta^* \geq \rr(\langle A\rangle)$.
\end{theorem}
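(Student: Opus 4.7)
The plan is as follows. For the first inequality $\delta^* \leq \rr(A)$, I would invoke the standard fact that every H-matrix is nonsingular: if $Ax = 0$ for some $x \neq 0$, then applying the triangle inequality rowwise gives $\langle A\rangle |x| \leq 0$, and multiplying by $\langle A\rangle^{-1} \geq 0$ (available because $\langle A\rangle$ is an M-matrix) forces $|x| \leq 0$, a contradiction. Hence the set of non-H-matrices contains every singular matrix, so the distance to the nearest non-H-matrix cannot exceed $\rr(A)$. Concretely, for any $\delta > \rr(A)$, pick a singular $B$ with $\|B - A\| = \rr(A) < \delta$; then $A' := B - A$ is a perturbation inside the $\delta$-ball producing a non-H-matrix.

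For the second inequality $\delta^* \geq \rr(\langle A\rangle)$, the key intermediate claim I would prove is the comparison-matrix contraction
\begin{align*}
\|\langle A+A'\rangle - \langle A\rangle\| \leq \|A'\|.
\end{align*}
This follows entrywise from the reverse triangle inequality: for diagonal entries $\bigl||a_{ii}+a'_{ii}| - |a_{ii}|\bigr| \leq |a'_{ii}|$, and for off-diagonal entries $\bigl||a_{ij}| - |a_{ij}+a'_{ij}|\bigr| \leq |a'_{ij}|$. Thus $|\langle A+A'\rangle - \langle A\rangle| \leq |A'|$ componentwise, and absoluteness together with monotonicity of the norm delivers the displayed bound.

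Given this, suppose $\|A'\| < \rr(\langle A\rangle)$ and consider the segment $B_t := (1-t)\langle A\rangle + t\langle A+A'\rangle$ for $t \in [0,1]$. Each $B_t$ is a Z-matrix (convex combination of Z-matrices), and
\begin{align*}
\|B_t - \langle A\rangle\| \leq t\,\|\langle A+A'\rangle - \langle A\rangle\| \leq t\,\|A'\| < \rr(\langle A\rangle),
\end{align*}
so every $B_t$ is nonsingular. Since $B_0 = \langle A\rangle$ is an M-matrix, all its eigenvalues have positive real parts; by continuity of eigenvalues along $t$ and the fact that none of them can cross zero, the spectrum of $B_t$ stays in the open right half-plane. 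Combined with the Z-matrix property, the eigenvalue-based characterization of M-matrices recalled in the preliminaries makes $B_1 = \langle A+A'\rangle$ an M-matrix, i.e., $A+A'$ is an H-matrix.

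The main delicate point is the eigenvalue-continuity step, which relies on the equivalence ``Z-matrix with all eigenvalues in the open right half-plane $\Leftrightarrow$ M-matrix'' listed in Section~1. Everything else reduces cleanly through the comparison matrix: the first inequality uses only nonsingularity of H-matrices, and the second hinges on the clean norm bound $\|\langle A+A'\rangle - \langle A\rangle\| \leq \|A'\|$ that absolute monotone norms afford.
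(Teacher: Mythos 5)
Your proposal is correct and follows essentially the same route as the paper: the first inequality comes from nonsingularity of H-matrices, and the second from the entrywise bound $|\langle A+A'\rangle-\langle A\rangle|\leq|A'|$, which via absoluteness and monotonicity of the norm keeps $\langle A+A'\rangle$ within the regularity radius of $\langle A\rangle$. Your explicit segment $B_t$ is exactly the paper's terser phrase that ``otherwise for a smaller perturbation (in the same direction) it would be singular,'' and both arguments conclude with the same continuity-of-eigenvalues step for Z-matrices.
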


\begin{proof}
The first statement is obvious as any H-matrix is nonsingular.

To show the second one, let $A'$ be any such that $\|A'\|<\rr(\langle A\rangle)$. Then $\langle A+A'\rangle=\langle A\rangle+\tilde{A}'$, where $\tilde{A}'$ is such that $|\tilde{A}'|\leq|A'|$. Thus $\|\tilde{A}'\|=\||\tilde{A}'|\|\leq\||A'|\|=\|A'\|<\rr(\langle A\rangle)$, from which $\langle A+A'\rangle$ is nonsingular. Due to continuity of eigenvalues it must have all real eigenvalues positive, and therefore be an M-matrix, as otherwise for a smaller perturbation (in the same direction) it would be singular.
\end{proof}

\begin{example}
Consider the same matrices as in Example~\ref{exMmatRad} and the spectral norm. 
$$
A=\begin{pmatrix}10&-2\\-1&10\end{pmatrix}.
$$
Considering $A$ as an H-matrix, the corresponding lower bound (Theorem~\ref{thmHmatRadLower} with induced 1-norm) and the upper bound (Theorem~\ref{thmHmatRadUpper}) on the H-matrix radius are $5.6569\leq \delta^*_H\leq 8.5858$.
The upper bound from Theorem~\ref{thmHmatRadRR} improves the estimation to $\delta^*_H\leq 8.5125$.
Considering the matrix as a P-matrix, the P-matrix radius would be the same $\delta^*_P\approx8.5125$. 
Notice that always $\delta^*_H\leq\delta^*_P$ holds. 

Consider now the second matrix
$$
B=\begin{pmatrix}20&-12\\-11&20\end{pmatrix}.
$$
The lower and upper bounds on the H-matrix radius are $5.6569\leq \delta^*_H \leq 8.5109$. 
The upper bound from Theorem~\ref{thmHmatRadRR} is again better, yielding $\delta^*_H\leq 8.5062$.
If we consider $B$ as a P-matrix, then the corresponding P-matrix radius has the same value of $\delta^*_P=8.5062$. 

Consider now a novel matrix
$$
C=\begin{pmatrix}2&1&1\\1&2&1\\1&1&2\end{pmatrix}.
$$
The lower and upper bounds on the H-matrix radius are $0\leq \delta^*_H \leq 0$, hence the H-matrix radius is zero. 
In contrast, the P-matrix radius has the value of $\delta^*=1$. Therefore, the P-matrix radius can be considerably larger than the H-matrix radius.

Eventually, consider the identity matrix $I_2$. The lower bound and the upper bounds on the H-matrix radius are $0.70711\leq \delta^*_H \leq 1$. In this case, the lower bound is tight since the perturbation matrix
$$
A'=\begin{pmatrix}0.5 & -0.5 \\ 0.5 & 0.5\end{pmatrix}.
$$
has the spectral norm $0.70711$ and $I_2-A'$ passes to be an H-matrix. Therefore $\delta^*_H\approx 0.70711$.
\end{example}

\section{Totally positive matrices}\label{sTp}

A matrix $A\in\R^{n\times n}$ is called \emph{totally positive} if the determinants of all submatrices are positive. In spite of the huge number of determinants in definition, a  suitable selection of $n^2$ of them is indeed necessary, which makes the problem tractable; see Fallat and Johnson \cite{FalJoh2011}. These matrices are called initial submatrices, and they are defined as follows: They are the submatrices the rows of which are indexed by $I$ and columns by $J$, and these index sets have the form $I=\seznam{k}$, $J=\sseznam{\ell}{\ell+k-1}$ or vice versa.
Let us denote the initial submatrices as $A^{(1)},\dots,A^{(n^2)}$.
Totally positive matrices thus obviously make another efficiently recognizable subclass of P-matrices.

\paragraph{Parametrization.}
Let $A_{\delta}=A-\delta \tilde{A}$ be a parametrization of the matrix $A$ and we want to compute the range of the values of $\delta$ for which  $A_{\delta}$ remains totally positive. On account of Section~\ref{sDet}, this problem just reduces to $n^2$ cases of parametrization of positive determinants. Thus, we compute the ranges of admissible values for $\det(A^{(i)}_{\delta})>0$, $i=1,\dots,n^2$, and take intersection of them.

Notice that a parametrization of one entry of $A$ was already discussed, e.g., in Fallat and Johnson \cite{FalJoh2011}.

\paragraph{Totally positive radius.}
The radius of totally positivity is
\begin{align*}
\delta^*:=  \sup\{\delta\geq0 \mmid 
  A+A'\mbox{ is  totally positive }\forall A': \|A'\|<\delta\}.
\end{align*}
This radius is simply determined based on Theorem~\ref{thmDetRad}.

\begin{theorem}\label{thmTpRad}
We have $\displaystyle\delta^*=\min_{i=1,\dots,n^2}r(A^{(i)})$.
\end{theorem}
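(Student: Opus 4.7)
The plan is to combine two earlier facts: the Fallat--Johnson characterization invoked just above the theorem (that total positivity is equivalent to positivity of the determinants of the $n^2$ initial submatrices $A^{(1)},\dots,A^{(n^2)}$) and Theorem~\ref{thmDetRad} (which equates the positive-determinant radius of a matrix with its regularity radius). So I would first rewrite $\delta^*$ as the supremum of those $\delta$ for which $\det\bigl((A+A')^{(i)}\bigr)>0$ holds for every $i$ and every $A'$ with $\|A'\|<\delta$, and then prove the two inequalities $\delta^*\geq\min_i r(A^{(i)})$ and $\delta^*\leq\min_i r(A^{(i)})$ separately.

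For the lower bound, I would fix any $A'$ with $\|A'\|<\min_i r(A^{(i)})$ and denote by $(A')^{(i)}$ the submatrix of $A'$ located in the same rows and columns as $A^{(i)}$. The submatrix-monotonicity assumption \nref{normAss1} yields $\|(A')^{(i)}\|\leq\|A'\|<r(A^{(i)})$, and Theorem~\ref{thmDetRad} applied to $A^{(i)}$ then guarantees $\det\bigl(A^{(i)}+(A')^{(i)}\bigr)>0$. Since $(A+A')^{(i)}=A^{(i)}+(A')^{(i)}$, every initial submatrix of $A+A'$ has positive determinant, so $A+A'$ is totally positive by Fallat--Johnson.

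For the reverse inequality, I would pick an index $i^*$ attaining the minimum and, for arbitrary $\eps>0$, choose a matrix $B$ of the same shape as $A^{(i^*)}$ with $\|B\|<r(A^{(i^*)})+\eps$ and $\det\bigl(A^{(i^*)}+B\bigr)\leq0$; such a $B$ exists by moving just past the nearest singular perturbation along a straight line from $A^{(i^*)}$, exactly as in the proof of Theorem~\ref{thmDetRad}. I would then extend $B$ to a full perturbation $A'$ of $A$ by placing $B$ in the rows and columns indexing $A^{(i^*)}$ and zeros elsewhere, so that $(A+A')^{(i^*)}=A^{(i^*)}+B$ has non-positive determinant and therefore $A+A'$ is not totally positive. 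For every norm actually employed in the paper (Frobenius, max-norm, induced $p$-norms), zero-padding preserves the norm, hence $\|A'\|<r(A^{(i^*)})+\eps$; letting $\eps\to0$ closes the argument.

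The main obstacle is precisely this last norm-preservation property: \nref{normAss1} bounds the norm of a submatrix by the norm of the full matrix, which is the direction needed for the lower bound, whereas the matching upper bound requires the opposite (extension by zeros should not enlarge the norm). This is not a formal consequence of \nref{normAss} and must be verified case-by-case, though it holds for all the concrete norms relevant here.
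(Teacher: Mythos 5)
Your proof follows essentially the same route as the paper: the paper's entire argument is the one-line reduction of total positivity to positivity of the determinants of the $n^2$ initial submatrices (Fallat--Johnson) combined with Theorem~\ref{thmDetRad} applied to each $A^{(i)}$, which is exactly your two-inequality argument spelled out in detail. Your closing caveat is also well taken: the paper states the theorem with no norm hypotheses, and the zero-padding property you isolate for the upper bound is indeed not a consequence of \nref{normAss1}, though it does hold for the Frobenius, max- and induced $p$-norms actually used in the paper.
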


In particular, for the spectral or Frobenius norm we have $\delta^*=\min_{i=1,\dots,n^2}\sigma_{\min}(A^{(i)})$, so the radius is computable in polynomial time.

Consider now the max-norm. Theorem~\ref{thmTpRad} is not convenient since computing the regularity radius is NP-hard problem. Nevertheless, $\delta^*$ can be still determined efficiently. The radius $\delta^*$ can be equivalently formulated as the supremal $\delta$ such that the interval matrix $[A-\delta ee^T,A+\delta ee^T]$ contains totally positive matrices only. By Garloff \cite{Gar1982}, this is equivalent the checking totally positivity of two matrices $A-\delta ss^T$ and $A+\delta ss^T$ only, where $s:=(1,-1,1,-1,\dots)^T$. Therefore, we compute the largest interval $(\delta^1,\delta^2)\ni0$ of admissible values for the parametrized matrix $A_{\delta}=A-\delta ss^T$ and put $\delta^*:=\min\{-\delta^1,\delta^2\}$. Notice that parametrization is simple in this case as $ss^T$ has rank one and we can proceed by Theorem~\ref{thmDetParRankone}.

\section{Inverse M-matrices}

A matrix $A\in\R^{n\times n}$ is \emph{an inverse M-matrix} if it is nonsingular and $A^{-1}$ is an M-matrix \cite{JohSmi2011}.

\paragraph{Parametrization.}
Particular perturbations and convex combination property were presented in \cite{JohSmi2011}.
As in the above cases, we consider the parametrized matrix $A$ in the form $A_{\delta}=A-\delta \tilde{A}$.
The matrix must remain nonsingular, which gives rise to the constraint $\det(A_{\delta})>0$. The condition $(A_{\delta}^{-1})_{ij}\leq0$ for $i\not=j$ equivalently reads $\frac{1}{\det(A_{\delta})}(-1)^{i+j}\det(A^{ji}_{\delta})\leq0$, or $(-1)^{i+j}\det(A^{ji}_{\delta})\leq0$. 
The last condition $A_{\delta}\geq0$ takes the form of linear constraints, but surprisingly needn't be considered. Due to continuity of eigenvalues, when $\det(A_{\delta})$ remains positive also the real eigenvalues of $A_{\delta}$ and $A_{\delta}^{-1}$ remain positive and thus $A_{\delta}^{-1}$ remains an M-matrix.
Therefore, in total our constraints are
\begin{align*}
\det(A_{\delta})>0,\quad
(-1)^{i+j}\det(A^{ji}_{\delta})\leq0,\ \ i\not=j.
\end{align*}
We will handle these constraints as in Section~\ref{sDet}. 

If in addition $\tilde{A}$ has rank one, then these constraints take a linear form. We can derive these constaints directly in a compact form.

\begin{theorem}
If $\tilde{A}=ab^T$, then the set of admissible values of $\delta$ for $A_{\delta}$ to be inverse M-matrix is described by linear constraints
\begin{align*}
\delta\left( (b^TA^{-1}a) A^{-1}- A^{-1}ab^TA^{-1}\right)_{ij}&\geq (A^{-1})_{ij},
\quad i\not=j\\
\delta b^TA^{-1}a&<1.
\end{align*}
\end{theorem}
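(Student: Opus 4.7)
The plan is to mirror the reasoning used for Theorem~\ref{thmMmatParRankone}, but now applied to $A_\delta^{-1}$ rather than to $A_\delta$ itself, and combined with the reduction described in the paragraph preceding the statement.

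First, observe that $\det(A)>0$ because $A^{-1}$ is an M-matrix (so all its real eigenvalues are positive, hence $\det(A^{-1})>0$). By the Sherman--Morrison formula, we have the two identities
\begin{align*}
\det(A_\delta) &= \det(A)\bigl(1-\delta\, b^TA^{-1}a\bigr),\\
A_\delta^{-1} &= A^{-1}+\frac{\delta}{1-\delta\, b^TA^{-1}a}\,A^{-1}ab^TA^{-1},
\end{align*}
valid whenever $1-\delta\, b^TA^{-1}a\neq0$.

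Next, according to the discussion preceding the theorem, $A_\delta$ is an inverse M-matrix if and only if (i) $\det(A_\delta)>0$ and (ii) $(A_\delta^{-1})_{ij}\leq0$ for all $i\neq j$; the positivity of the real eigenvalues of $A_\delta^{-1}$ follows by continuity from the corresponding property at $\delta=0$, as long as the determinant does not vanish. Condition (i) translates immediately, via the first identity above, into the strict inequality $\delta\, b^TA^{-1}a<1$, which is the second constraint of the claim.

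It remains to rewrite condition (ii) in linear form. Since $1-\delta\, b^TA^{-1}a>0$ by (i), multiplying the inequality $(A_\delta^{-1})_{ij}\leq 0$ by this positive scalar preserves its direction, yielding
\begin{align*}
(1-\delta\, b^TA^{-1}a)(A^{-1})_{ij}+\delta\,(A^{-1}ab^TA^{-1})_{ij}\leq 0
\quad (i\neq j),
\end{align*}
which rearranges exactly to
\begin{align*}
\delta\bigl((b^TA^{-1}a)A^{-1}-A^{-1}ab^TA^{-1}\bigr)_{ij}\geq (A^{-1})_{ij},\quad i\neq j,
\end{align*}
the first constraint of the claim. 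The only subtlety is the equivalence of the signed inequality before and after multiplying by $1-\delta\, b^TA^{-1}a$; this is the step requiring the strict positivity of this quantity, which is precisely guaranteed by the second (strict) constraint, so the two constraints together characterize the admissible set.
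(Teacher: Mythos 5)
Your proof is correct and follows essentially the same route as the paper: reduce the inverse M-matrix property to the conditions $\det(A_\delta)>0$ and $(A_\delta^{-1})_{ij}\leq0$ for $i\neq j$ (dropping $A_\delta\geq0$ by the continuity-of-eigenvalues argument), then apply the Sherman--Morrison formula and clear the positive denominator $1-\delta\,b^TA^{-1}a$ to obtain the linear system. The paper's own proof is just a terser version of this, leaving the final rearrangement implicit, so your write-up adds detail but no new idea.
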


\begin{proof}
The set of admissible values is characterized by $(A_{\delta}^{-1})_{ij}\leq0$, $i\not=j$. As in the proof of Theorem~\ref{thmMmatParRankone}, we use the Sherman--Morrison formula 
\begin{align*}
A_{\delta}^{-1}
=(A-\delta ab^T)^{-1}
=A^{-1}+\frac{\delta}{1-\delta b^TA^{-1}a}A^{-1}ab^TA^{-1},
\end{align*}
from which the resulting system is derived.
\end{proof}

\paragraph{Inverse M-matrix radius.}
For a given matrix norm, define the radius of the inverse M-matrix property of $A$ as
\begin{align*}
\delta^*:=  \sup\{\delta\geq0 \mmid 
  A+A'\mbox{ is an inverse M-matrix }\forall A': \|A'\|<\delta\}.
\end{align*}

For any matrix norm, computing the inverse M-matrix radius can be reduced to computing $n^2-n+1$ radii of nonsingularity in the same norm.
Thus, $\delta^*$ is efficiently computable as long as the radius of nonsingularity is efficiently computable. 

\begin{theorem}
We have 
$
\delta^* = \min_{i\not=j}\{\rr(A),\rr(A^{ij})\}.
$
\end{theorem}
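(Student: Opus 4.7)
The plan is to recast the inverse M-matrix property in determinantal form, exactly as in the parametrization discussion above, and then apply Theorem~\ref{thmDetRad} to the full matrix and to each of the $(i,j)$-minors. Specifically, for a matrix $B$ sufficiently close to $A$, being an inverse M-matrix is equivalent to the two conditions $\det(B)>0$ and $(-1)^{i+j}\det(B^{ji})\leq 0$ for all $i\neq j$: the first keeps $B$ nonsingular with the correct sign of the determinant; the second encodes $(B^{-1})_{ij}\leq 0$ via the cofactor formula; and the auxiliary nonnegativity $B\geq 0$ may be dropped by the same continuity argument invoked in the parametrization paragraph.

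For the lower bound I would take any $A'$ with $\|A'\|$ strictly below $\min\{\rr(A),\min_{i\neq j}\rr(A^{ij})\}$ and consider the segment $A_t:=A+tA'$ for $t\in[0,1]$. Because $\|tA'\|<\rr(A)$, the determinant $\det(A_t)$ is nonzero along the segment; because \nref{normAss1} gives $\|t(A')^{ji}\|\leq\|A'\|<\rr(A^{ji})$, each cofactor determinant $\det(A_t^{ji})=\det(A^{ji}+t(A')^{ji})$ is also nonzero throughout. By continuity in $t$, the signs at $t=1$ coincide with those at $t=0$, so the two reformulated conditions remain valid and $A+A'$ is still an inverse M-matrix.

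For the upper bound, $\delta^*\leq\rr(A)$ is immediate since no singular matrix is inverse M. For each fixed $i\neq j$ I would take, for an arbitrary $\varepsilon>0$, a perturbation $C$ of the submatrix $A^{ji}$ with $\|C\|<\rr(A^{ji})+\varepsilon$ for which $\det(A^{ji}+C)$ has the sign opposite to that of $\det(A^{ji})$; such a $C$ exists since $\rr(A^{ji})$ is, by Theorem~\ref{thmDetRad}, also the radius of sign preservation of the determinant. Embedding $C$ as an $n\times n$ matrix $A'$ that is zero in row $j$ and column $i$ and agrees with $C$ elsewhere, for the matrix norms considered here the embedding preserves the norm, hence $\|A'\|<\rr(A^{ji})+\varepsilon$ while $(-1)^{i+j}\det((A+A')^{ji})>0$ violates condition (ii). Letting $\varepsilon\to 0$ yields $\delta^*\leq\rr(A^{ji})$.

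The main obstacle is the final sign-flip step in the upper bound: one needs the infimum of $\|C\|$ over $C$ producing a genuine sign change of $\det(A^{ji}+C)$ to equal $\rr(A^{ji})$, not merely to touch zero. A small limit argument is required, essentially by combining Theorem~\ref{thmDetRad} applied to $A^{ji}$ with the observation that the polynomial $C\mapsto\det(A^{ji}+C)$ cannot be locally constant near a singular matrix, so a direction of genuine crossing exists arbitrarily close to any critical $C$. A subsidiary subtlety is that although the statement reads \quo{any matrix norm}, the zero-pad embedding tacitly needs the norm to be well behaved in the sense of \nref{normAss}, a convention already used elsewhere in the paper.
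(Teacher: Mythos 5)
Your proposal follows the paper's own route exactly: recast the inverse M-matrix property via the cofactor formula $(B^{-1})_{ij}=\det(B)^{-1}(-1)^{i+j}\det(B^{ji})$ into sign stability of $\det(A)$ and of the $\det(A^{ji})$, drop the nonnegativity condition by the continuity-of-eigenvalues argument, and invoke Theorem~\ref{thmDetRad} to identify each sign-stability radius with a regularity radius. It is correct, and your extra care (the segment argument for the lower bound, the zero-padding embedding, and the genuine sign-crossing limit, together with the implicit norm assumptions these need) only makes explicit steps that the paper's terser proof leaves unstated.
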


\begin{proof}
Due to formula $(A^{-1})_{ij}=\det(A)^{-1}(-1)^{i+j}\det(A^{ji})$, in order that $A^{-1}$ has nonpositive offdiagonal entries, we must ensure that $\det(A)$ and $\det(A^{ji})$ remain sign stable. Sign stability of $\det(A)$ also implies that the real eigenvalues of $A$ and $A^{-1}$ remain positive and thus $A^{-1}$ remains an M-matrix. Due to Theorem~\ref{thmDetRad}, the radius of the sign stable determinant is equal to the radius of nonsingularity.
\end{proof}

\begin{corollary}
In the spectral or Frobenius norm, we have 
$
\delta^* = \min_{i\not=j}\{\sigma_{\min}(A),\sigma_{\min}(A^{ij})\}.
$
\end{corollary}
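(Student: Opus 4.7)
The plan is a direct application of the theorem immediately preceding the corollary. That theorem expresses the inverse M-matrix radius as
\begin{align*}
\delta^* = \min_{i\neq j}\{\rr(A),\rr(A^{ij})\},
\end{align*}
valid for any matrix norm. The only thing left is to replace each regularity radius on the right-hand side by its explicit value in the spectral or Frobenius norm.

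For this substitution I would invoke the fact, already recalled in the preliminaries of the paper, that for any orthogonally invariant matrix norm and any nonsingular matrix $B$ one has $\rr(B)=\sigma_{\min}(B)$. Both the spectral norm and the Frobenius norm are orthogonally invariant, so the identity applies uniformly to $A$ and to each submatrix $A^{ij}$. Applying it term by term to the minimum in the theorem yields the stated formula. No additional inequality or structural property of inverse M-matrices is needed beyond what the previous theorem already uses.

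The one point worth commenting on is the edge case in which some $A^{ij}$ is singular. In that situation $\sigma_{\min}(A^{ij})=0$, and correspondingly $(A^{-1})_{ij}=(-1)^{i+j}\det(A^{ji})/\det(A)=0$, so an arbitrarily small perturbation can drive this off-diagonal entry of the inverse strictly above zero; hence $\delta^*=0$, consistent with the formula. Therefore no separate case distinction is required.

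I do not anticipate any real obstacle: the content is entirely carried by the theorem and by the orthogonal-invariance identity for the regularity radius. The only stylistic care is to state clearly that the Frobenius norm, though not induced by a vector norm, is still orthogonally invariant and hence admits the same singular-value characterization of $\rr(\cdot)$ as the spectral norm.
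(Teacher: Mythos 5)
Your proposal is correct and matches the paper's (implicit) argument exactly: the corollary follows from the preceding theorem by substituting the singular-value characterization $\rr(B)=\sigma_{\min}(B)$, valid for the spectral and Frobenius norms as orthogonally invariant norms, which the paper already recalled in its preliminaries. Your remark on the singular-submatrix edge case is a harmless bonus consistent with the formula, not a needed extra step.
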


\section{Inverse nonnegative matrices}

A matrix $A\in\R^{n\times n}$ is \emph{inverse nonnegative} if $A^{-1}\geq0$. As a particular sub-class, M-matrices are inverse nonnegative.

\paragraph{Parametrization.}
Let $A_{\delta}=A-\delta \tilde{A}$ be a parametrization of the matrix $A$ and we are again interested in computation of the range of the values of $\delta$ for which  $A_{\delta}$ remains inverse nonnegative. For any $i,j$, we have $(A^{-1})_{ij}=\frac{1}{\det(A)}(-1)^{i+j}\det(A^{ji})$. 
Therefore even in this case, parametrization reduces to the cases studied in Section~\ref{sDet}. The range of admissible values is determined from the constraints $\det(A_{\delta})>0$, $(-1)^{i+j}\det(A^{ji}_{\delta})\geq0$, $\forall i,j$.

Provided $A'$ has rank one, a more effective method exists. In this case the matrix has the form of $A'=ab^T$ for some $a,b\in\R^n$. 

\begin{theorem}\label{thmInParRankone}
If $\tilde{A}=ab^T$, then the set of admissible values of $\delta$ for $A_{\delta}$ to be inverse nonnegative is described by linear constraints
\begin{align*}
\delta\left( (b^TA^{-1}a) A^{-1}- A^{-1}ab^TA^{-1}\right)&\leq A^{-1},\\
\delta b^TA^{-1}a&<1.
\end{align*}
\end{theorem}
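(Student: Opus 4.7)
The plan is to imitate the proof of Theorem~\ref{thmMmatParRankone}, since inverse nonnegativity amounts to $A_\delta^{-1} \geq 0$ alone, without the extra sign conditions on off-diagonal entries of $A_\delta$ that are required for the M-matrix case. The core tool is again the Sherman--Morrison formula applied to the rank-one update.

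First I would invoke the Sherman--Morrison formula to write
$$
A_\delta^{-1} = (A - \delta ab^T)^{-1} = A^{-1} + \frac{\delta}{1 - \delta b^T A^{-1} a}\, A^{-1} a b^T A^{-1},
$$
which is valid precisely when $1 - \delta b^T A^{-1} a \neq 0$; this is also the condition for $A_\delta$ to be nonsingular. The condition $A_\delta^{-1} \geq 0$ is then equivalent to requiring that the scalar $1 - \delta b^T A^{-1} a$ and the matrix $(1 - \delta b^T A^{-1} a) A^{-1} + \delta A^{-1} a b^T A^{-1}$ have the same sign (entrywise, for the matrix). Rearranging the matrix inequality under the assumption that the denominator is positive yields exactly the first stated linear constraint
$$
\delta\bigl( (b^T A^{-1} a) A^{-1} - A^{-1} a b^T A^{-1} \bigr) \leq A^{-1},
$$
while the scalar condition is the second one, $\delta b^T A^{-1} a < 1$.

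The one point requiring care — and the main (mild) obstacle — is justifying that one really must take the denominator to be positive rather than negative. This follows from a standard continuity argument: at $\delta = 0$ we have $A_\delta^{-1} = A^{-1} \geq 0$ and $1 - \delta b^T A^{-1} a = 1 > 0$. The admissible set contains $\delta = 0$, and as $\delta$ varies continuously, the denominator can change sign only by vanishing, at which instant $A_\delta$ becomes singular and $A_\delta^{-1}$ does not exist, let alone remain nonnegative. Hence on the entire admissible set the denominator stays strictly positive, and multiplying the original inequality $A_\delta^{-1} \geq 0$ through by it preserves the inequality direction, giving the claimed constraints.
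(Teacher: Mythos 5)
Your computational core coincides with the paper's proof: both apply the Sherman--Morrison formula and rearrange $A_\delta^{-1}\geq 0$ into the two stated constraints, and the sufficiency direction (the constraints imply that $A_\delta$ is inverse nonnegative) is sound in both. The genuine gap is exactly in the step you single out as the main obstacle: your continuity argument that every admissible $\delta$ has positive denominator tacitly assumes the admissible set is connected. What it actually shows is that the denominator stays positive on the connected component of the admissible set containing $\delta=0$; it does not exclude further admissible values lying beyond a singular (hence inadmissible) point, where the denominator has already changed sign. This really happens. Take
\[
A=I_2,\qquad a=b=(1,-1)^T,\qquad
A_\delta=I_2-\delta ab^T=\begin{pmatrix}1-\delta & \delta\\ \delta & 1-\delta\end{pmatrix}.
\]
Here $b^TA^{-1}a=2$, and the two constraints of the theorem reduce to $\delta\leq 0$. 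Yet $A_1=\begin{pmatrix}0&1\\1&0\end{pmatrix}$ is inverse nonnegative (it is its own inverse), and the full set of admissible values is $(-\infty,0]\cup[1,\infty)$: as $\delta$ increases, $A_\delta$ passes through singularity at $\delta=\tfrac12$ and returns to inverse nonnegativity with $\det(A_\delta)<0$, i.e., with negative denominator $1-\delta b^TA^{-1}a$. So the necessity direction you were trying to establish is simply false in general, and no repair of the continuity argument can save it.

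The comparison with Theorem~\ref{thmMmatParRankone}, which you set out to imitate, shows why the imitation breaks down precisely here. For M-matrices, positivity of the denominator at every admissible $\delta$ is automatic for an algebraic (not topological) reason: an M-matrix has positive determinant, and $\det(A_\delta)=\det(A)(1-\delta b^TA^{-1}a)$ with $\det(A)>0$ forces $1-\delta b^TA^{-1}a>0$. Inverse nonnegativity imposes no sign condition on the determinant (the permutation matrix above has determinant $-1$), and this is exactly the freedom your argument overlooks. For the record, the paper's own proof is silent on this point as well, so your proposal is not worse than the original; the constraints are correct under the weaker reading announced in the paper's introduction, namely as describing an interval $\mathfrak{o}\ni 0$ of admissible values (they capture exactly those admissible $\delta$ at which $\det(A_\delta)$ has the same sign as $\det(A)$), and your sufficiency argument does establish that much.
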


\begin{proof}
The set of admissible values is characterized by $A_{\delta}^{-1}\geq0$. As in the proof of Theorem~\ref{thmMmatParRankone}, we use the Sherman--Morrison formula 
\begin{align*}
A_{\delta}^{-1}
=(A-\delta ab^T)^{-1}
=A^{-1}+\frac{\delta}{1-\delta b^TA^{-1}a}A^{-1}ab^TA^{-1}.
\end{align*}
Thus the condition $A_{\delta}^{-1}\geq0$ has the equivalent formulation as demanded.
\end{proof}

\paragraph{Inverse nonnegativity radius.}
The inverse nonnegativity radius of $A$ is naturally defined as
\begin{align*}
\delta^*:=  \sup\{\delta\geq0 \mmid 
  A+A'\mbox{ is inverse nonnegative }\forall A': \|A'\|<\delta\}.
\end{align*}
By Theorem~\ref{thmDetRad} and the above observation regarding parametrization, we obtain:

\begin{theorem}\label{thmInvNonnegRad}
We have $\delta^*=\min_{i,j=1,\dots,n} \{ r(A), r(A^{ji}) \}$.
\end{theorem}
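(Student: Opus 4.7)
The plan is to translate inverse nonnegativity into a sign-stability condition on a finite family of determinants via the cofactor formula, and then reduce each such sign-stability question to a regularity radius by appealing to Theorem~\ref{thmDetRad}. Since
\[
(A^{-1})_{ij}=\frac{(-1)^{i+j}\det(A^{ji})}{\det(A)},
\]
the perturbed matrix $A+A'$ is inverse nonnegative exactly when $\det(A+A')>0$ and every signed minor $(-1)^{i+j}\det((A+A')^{ji})$ retains the nonnegative sign it inherits from the (assumed) inverse nonnegativity of $A$. Thus the problem collapses to tracking the zero-crossing distance for each of the $n^2+1$ determinants appearing in the cofactor expression.

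For the upper bound $\delta^*\le\min_{i,j}\{\rr(A),\rr(A^{ji})\}$, I would argue in two parallel ways. The bound $\delta^*\le\rr(A)$ is already contained in Theorem~\ref{thmDetRad}: a perturbation slightly larger in norm than $\rr(A)$ can push $\det(A+A')$ non-positive, breaking nonsingularity. For the bound $\delta^*\le\rr(A^{ji})$, I would take a singular neighbour $B$ of the minor $A^{ji}$ at the critical distance $\rr(A^{ji})$ and lift $B$ back to a perturbation $A'$ of $A$ by padding the removed row $j$ and column $i$ with zeros. Under the paper's standard norm assumption~\nref{normAss1}, this lifted $A'$ has the same norm as $B$, and an additional arbitrarily small perturbation in the appropriate direction drives $(-1)^{i+j}\det((A+A')^{ji})$ strictly negative, so that $(A+A')^{-1}_{ij}<0$.

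For the lower bound $\delta^*\ge\min_{i,j}\{\rr(A),\rr(A^{ji})\}$, suppose $\|A'\|$ is strictly below this minimum. Then by~\nref{normAss1} each submatrix $(A')^{ji}$ satisfies $\|(A')^{ji}\|\le\|A'\|<\rr(A^{ji})$, so by Theorem~\ref{thmDetRad} the determinants $\det((A+A')^{ji})$ cannot vanish, and similarly $\det(A+A')\ne 0$. Continuity along the segment $A+tA'$, $t\in[0,1]$, then forces each of these determinants to keep the sign it has at $t=0$, so every entry of $(A+A')^{-1}$ remains nonnegative.

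The main obstacle I anticipate is purely technical: the embedding argument in the upper bound requires that padding a perturbation of $A^{ji}$ with a zero row and column yields a matrix of identical norm, which is exactly the content of~\nref{normAss1}--\nref{normAss2} invoked throughout the paper. Once that is granted, the proof is a direct corollary of Theorem~\ref{thmDetRad} and the cofactor formula, with no further calculation needed.
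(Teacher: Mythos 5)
Your proposal is correct and is essentially the paper's own argument: the paper derives this theorem, without a separate proof, from the cofactor formula $(A^{-1})_{ij}=(-1)^{i+j}\det(A^{ji})/\det(A)$ and Theorem~\ref{thmDetRad} applied to $A$ and to each minor $A^{ji}$, exactly as you do. One caveat on your final remark: the zero-padding step in the upper bound needs the norm to be invariant under bordering by a zero row and column, which is \emph{not} literally the content of \nref{normAss1}--\nref{normAss2} (those only give the one-sided inequality $\|(A')^{ji}\|\le\|A'\|$); the property does hold for every norm the paper works with (induced $p$-norms, Frobenius, max-norm), and the paper itself passes over this point in silence.
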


In particular, for the spectral or Frobenius norm we calculate the radius efficiently as $\delta^*=\min_{i,j=1,\dots,n} \{ \sigma_{\min}(A), \sigma_{\min}(A^{ji}) \}$. It may happen that there is a matrix $C$ in the $\delta^*$ distance from $A$ and such that it is not inverse nonnegative, and also it may happen that this distance is not achieved. That is, $\delta^*$ may be achieved as maximum or not. This is illustrated in the following example.

\begin{example}
Consider the spectral norm and the matrix 
$$
A=\begin{pmatrix}10&-1\\-1&10\end{pmatrix}.
$$
Then $\delta^*=\sigma_{\min}(A^{12})=1$ and $\sigma_{\min}(A)=9$. So every matrix no more far from $A$ than 1 is inverse nonnegative.

Now, let
$$
B=\begin{pmatrix}10&-9\\-9&10\end{pmatrix}.
$$
Then $\delta^*=\sigma_{\min}(B)=1$. The matrix
$$
C=\begin{pmatrix}10&-10\\-10&10\end{pmatrix}.
$$
is not inverse nonnegative, but its distance from $B$ is~$1$.
\end{example}

For the max-norm, Theorem~\ref{thmInvNonnegRad} is not convenient since computing the regularity radius is NP-hard problem. We can, however, determine $\delta^*$ efficiently. The radius $\delta^*$ can be equivalently formulated as the supremal $\delta$ such that the interval matrix $[A-\delta ee^T,A+\delta ee^T]$ contains inverse nonnegative matrices only. By the Kuttler theorem \cite{Kut1971} (for some extensions see \cite{Neu1990,Roh2012b}), this is equivalent the checking inverse nonnegativity of the lower and upper bound matrices $A-\delta ee^T$ and $A+\delta ee^T$. Therefore, we compute the largest interval $(\delta^1,\delta^2)\ni0$ of admissible values for the parametrized matrix $A_{\delta}=A-\delta ee^T$ and put $\delta^*:=\min\{-\delta^1,\delta^2\}$. Parametrization is again simple in this case since $ee^T$ has rank one and we can proceed by Theorem~\ref{thmInParRankone}.

\paragraph{Comparison.}
Let us now compare M-matrix and inverse nonnegativity radii. Let $A\in\R^{n\times n}$ be an M-matrix and denote by $\delta^*_{\text{M}}$ and $\delta^*_{\text{IN}}$ the M-matrix and inverse nonnegativity radii, respectively. In general, we have $\delta^*_{\text{M}}\leq\delta^*_{\text{IN}}$. Equality is attained for specific situations.

\begin{proposition}
Suppose the matrix norm satisfies \nref{normAss}. Then $\delta^*_{\text{M}}=\delta^*_{\text{IN}}$ provided
\begin{enumerate}[(i)]
\item
$n=2$, or
\item
$r(A)\leq-a_{ij}$ for all $i\not=j$.
\end{enumerate}
\end{proposition}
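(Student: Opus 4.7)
The plan is to feed in the two explicit radius formulas already at our disposal. From Theorem~\ref{thmMmatRad} we have
$\delta^*_{\text{M}} = \min\{\min_{i\neq j}(-a_{ij}),\, \rr(A)\}$,
and from Theorem~\ref{thmInvNonnegRad} we have
$\delta^*_{\text{IN}} = \min_{i,j=1,\dots,n}\{\rr(A),\, \rr(A^{ji})\}$.
The inequality $\delta^*_{\text{M}} \leq \delta^*_{\text{IN}}$ is already noted in the paragraph preceding the proposition (an M-matrix perturbation is constrained in strictly more ways than an inverse nonnegative one), so in either case it suffices to establish the reverse inequality $\delta^*_{\text{IN}} \leq \delta^*_{\text{M}}$.

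For case (ii), the hypothesis $\rr(A) \leq -a_{ij}$ for every $i\neq j$ collapses the first formula to $\delta^*_{\text{M}} = \rr(A)$. Because $\rr(A)$ appears as one of the entries in the minimum defining $\delta^*_{\text{IN}}$, I immediately get $\delta^*_{\text{IN}} \leq \rr(A) = \delta^*_{\text{M}}$, which is all that is needed.

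For case (i), with $n=2$, the off-diagonal submatrices $A^{12}$ and $A^{21}$ are the $1\times 1$ matrices $(a_{21})$ and $(a_{12})$. I would first observe that assumption~\nref{normAss2} forces $\|(c)\| = |c|\cdot\|e_1 e_1^T\| = |c|$, so the regularity radius of any scalar matrix $(c)$ equals $|c|$. The M-matrix sign pattern $a_{12}, a_{21} \leq 0$ then yields $\rr(A^{12}) = -a_{21}$ and $\rr(A^{21}) = -a_{12}$. Since these quantities, together with $\rr(A)$, all appear as entries of the minimum in the formula for $\delta^*_{\text{IN}}$, I conclude
\[
\delta^*_{\text{IN}} \;\leq\; \min\{\rr(A),\, \rr(A^{12}),\, \rr(A^{21})\} \;=\; \min\{\rr(A),\, -a_{12},\, -a_{21}\} \;=\; \delta^*_{\text{M}}.
\]

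There is essentially no serious obstacle: in both parts the argument reduces to recognizing that a specific term already present in the minimum defining $\delta^*_{\text{IN}}$ dominates the minimum defining $\delta^*_{\text{M}}$. The only mildly delicate point is the evaluation of the regularity radius of a $1\times 1$ matrix in case (i), which is pinned down by the norm axiom \nref{normAss2} and so is immediate.
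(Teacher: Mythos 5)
Your proof is correct and follows essentially the same route as the paper: both arguments plug in the formulas of Theorems~\ref{thmMmatRad} and~\ref{thmInvNonnegRad}, invoke the general inequality $\delta^*_{\text{M}}\leq\delta^*_{\text{IN}}$ from the preceding paragraph, and get the reverse inequality by noting that the minimum defining $\delta^*_{\text{IN}}$ runs over a superset of the terms defining $\delta^*_{\text{M}}$ (case (i)) or simply contains $\rr(A)=\delta^*_{\text{M}}$ (case (ii)). Your explicit justification via \nref{normAss2} that the regularity radius of a $1\times 1$ matrix $(c)$ equals $|c|$ is a detail the paper leaves implicit, but it is the same argument.
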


\begin{proof}
By Theorems~\ref{thmMmatRad} and~\ref{thmInvNonnegRad} and from the assumptions, we have
\begin{align*}
\delta^*_{\text{IN}}
= \min_{i,j=1,\dots,n} \{ r(A), |a|_{ij}) \}
\leq \min_{i\not=j} \{ r(A), |a|_{ij}) \}
= \delta^*_{\text{M}}
\end{align*}
when $n=2$, and we have
\begin{align*}
\delta^*_{\text{IN}}
= \min_{i,j=1,\dots,n} \{ r(A), r(A^{ji}) ) \}
\leq r(A)
= \min_{i\not=j} \{ r(A), -a_{ij}) \}
= \delta^*_{\text{M}}
\end{align*}
in the second case.
\end{proof}

When the assumptions are not satisfied, equality is not attained in general.

\section{Conclusion}

We investigated parametrization and stability of various matrix properties. In most of the cases, the maximal allowable perturbation coefficient can be determined by reduction to a polynomial number of simpler problems. Parametrization is particularly effective provided the perturbation matrix has rank one. The maximal circle of stability is also efficiently computable as long as we use a suitable matrix norm. The spectral and Frobenius norm are among the most convenient, whereas the max-norm cases can be intractable in some cases.

\subsubsection*{Acknowledgments.} 


\bibliographystyle{abbrv}
\bibliography{toler_mat_prop}

\end{document}